\documentclass[11pt, reqno]{amsart}
\usepackage{amsmath,amsfonts,amssymb,amsthm,enumerate,appendix,caption}
\usepackage{cmap,mathtools}
\usepackage{paralist,bm}
\usepackage{graphics} 
\usepackage{epsfig} 
\usepackage{graphicx}
\usepackage{braket}
\usepackage{epstopdf}
 \usepackage[colorlinks=true]{hyperref}
\usepackage{xcolor}
\definecolor{myblue}{rgb}{0.0, 0.0, 1.0}
\definecolor{mygreen}{rgb}{0.01,0.75,0.20}
\hypersetup{ linkcolor=myblue, colorlinks=true, urlcolor=black, citecolor=red}
\usepackage{hyperref}

\newtheorem{theorem}{Theorem}[section]

\newtheorem{thm}[theorem]{Theorem}
\newtheorem{lem}[theorem]{Lemma}

\newtheorem{proposition}[theorem]{Proposition}

\newtheorem{definition}[theorem]{Definition}

\newtheorem{remark}[theorem]{Remark}

\newtheorem{rem}[theorem]{Remark}

\theoremstyle{definition}

\usepackage[margin=1in]{geometry}

\numberwithin{equation}{section}

\newcommand{\dx}{\,\mathrm{d}x}
\newcommand{\dy}{\,\mathrm{d}y}

\DeclarePairedDelimiter\norm{\lVert}{\rVert}%
\makeatletter
\let\oldnorm\norm
\def\norm{\@ifstar{\oldnorm}{\oldnorm*}}

\newcommand{\om} {\omega}
\newcommand{\Om} {\Omega}

\newcommand{\la} {\lambda}

\newcommand\restr[2]{{
  \left.\kern-\nulldelimiterspace 
  #1 
  \right|_{#2} 
  }}

\def\w{{\widetilde w}}

\def\w2{{W^{1,2}_0(\Om)}}
\def\hh2{{H^1_0(\Om)}}

\def\C{{\mathcal C}}

\def\N{{\mathbb N}}
\def\F{{\mathcal F}}

\def\R{{\mathbb R}}

\def\({{\Big(}}
\def\){{\Big)}}

\def\ws2{{\F_{\frac{N}{2}}}}

\def\c1{{\C_c^1}}

\def\d{{\rm d}}

\def\dx{{\rm d}x}
\def\dy{{\rm d}y}

\newcommand{\Hmm}[1]{\leavevmode{\marginpar{\tiny%
			$\hbox to 0mm{\hspace*{-0.5mm}$\leftarrow$\hss}%
			\vcenter{\vrule depth 0.1mm height 0.1mm width \the\marginparwidth}%
			\hbox to
			0mm{\hss$\rightarrow$\hspace*{-0.5mm}}$\\\relax\raggedright #1}}}

\author[U.\ Das]{Ujjal Das}
\address[Ujjal Das]{
	BCAM -- Basque Center for Applied Mathematics,
	48009 Bilbao, Spain}
\email{\href{mailto:udas@bcamath.org}{udas@bcamath.org}}

\author[L.\ Fanelli]{Luca Fanelli}
\address[Luca Fanelli]{
	Ikerbasque, Basque Foundation for Science,
	48011 Bilbao, Spain,
	\newline \phantom{\quad} \&
	Universidad del Pa\'is Vasco / Euskal Herriko Unibertsitatea,
	48080 Bilbao, Spain,
	\newline \phantom{\quad}\&
	BCAM -- Basque Center for Applied Mathematics,
	48009 Bilbao, Spain}
\email{\href{mailto:luca.fanelli@ehu.es}{luca.fanelli@ehu.eus}}

\author[L.\ Roncal]{Luz Roncal}
\address[Luz Roncal]{
	BCAM -- Basque Center for Applied Mathematics,
	48009 Bilbao, Spain,
	\newline \phantom{\quad} \&
	Ikerbasque, Basque Foundation for Science,
	48011 Bilbao, Spain,
	\newline \phantom{\quad} \&
	Universidad del Pa\'is Vasco / Euskal Herriko Unibertsitatea,
	48080 Bilbao, Spain}
\email{\href{mailto:lroncal@bcamath.org}{lroncal@bcamath.org}}

\date{\today}

\subjclass[2020]{Primary: 35B60 Secondary: 35Q40}
\keywords{Dirac operator, quantitative unique continuation, Landis' conjecture, vanishing order}
\begin{document}
	\title[Quantitative Landis-type result for Dirac operators]{ Quantitative Landis-type result for Dirac operators}
	
	
	
%
%
%
	
	\begin{abstract}
{We study quantitative unique continuation at infinity for Dirac equations with bounded matrix-valued potentials. For the massless Dirac operator $\mathcal{D}_n$ in $\R^n$, we establish a Landis-type estimate showing that the vanishing order of any nontrivial bounded solution of $( \mathcal{D}_n + \mathbb{V} ) \varphi = 0$ satisfies a lower bound of order $\exp(-\kappa R^{2} (\log R)^{2})$ as $|x|=R\to \infty$; the quadratic growth in the exponent is sharp, in view of previous known results. Our proof follows a Bourgain--Kenig type approach based on a Carleman inequality for Dirac operators which relies on a local H\"older regularity result, which we also prove. In two dimension, we obtain improved quantitative estimates under symmetry assumptions on the potential $\mathbb{V}$ and for real-valued solutions.
 Finally, we also derive qualitative Landis-type results for Dirac equations with decaying potentials, including critical decay rates.}

	\end{abstract}

\maketitle

\section{Introduction}
In this article, we are interested to investigate the quantitative unique continuation properties at infinity of the following Dirac equation:
\begin{align} \label{Eq:Dirac}
    ( \mathcal{D}_n + \mathbb{V} ) \varphi = 0 \ \ \mbox{in} \ \R^n \,,
\end{align}
where $\mathcal{D}_n$ is the massless Dirac operator as defined in Section \ref{Sec:Prelim} and $\mathbb{V}:\R^n \rightarrow \mathbb{C}^{N \times N}$ is a bounded matrix-valued potential on $\R^n$, where the order of the matrix is $N=2^{\frac{n}{2}}$ or $N=2^{\frac{(n+1)}{2}}$ depending on whether $n$ is even or odd.
Let $\mathbb{U} \in H^1_{\operatorname{loc}}(\R^n;\mathbb{C}^{N})$ be a bounded weak solution of the above equation, see Definition \ref{Def:WS}. For $R>0$, we define $$M_R[\mathbb{U}]=\inf_{|x|=R} \left[\sup_{B_1(x)} |\mathbb{U}(y)| \right] \,,$$ 
where $|\mathbb{U}(\cdot)|$ is the Euclidean $\ell^2$-norm on $\mathbb{C}^N$ {{and $B_1(x)$ is the open unit ball centered at $x\in\R^n$}}.
We look for a sharp vanishing  estimate, i.e., a lower bound of $M_R[\mathbb{U}]$ for any nontrivial bounded solution $\mathbb{U}$ of \eqref{Eq:Dirac}. A vanishing estimate gives us a quantitative form of the unique continuation property at infinity of the solutions of \eqref{Eq:Dirac}.    

Such unique continuation results are related to {{a}} well-known {conjecture {{by }}Kondrat'ev} and Landis, {{usually referred to as}} the {\it Landis' conjecture}. In 1988, V. A. Kondrat'ev and E. M. Landis provided an up-to-date survey \cite{KL88} on the most important results of the qualitative theory of second-order elliptic and parabolic equations obtained by various authors over the previous years. In \cite[\S3.5]{KL88} (see also \cite[\S3.5]{KL91}), they raised the question whether any bounded solution of the Schrödinger equation:
\begin{align} \label{Eq:Sch}
    (-\Delta + V)u=0 \ \   \mbox{in} \ \R^n \,, \ \ \ \| V\|_{L^{\infty}(\R^n)} \leq 1 \,,
\end{align}
which decays as fast as $\mathrm{e}^{-\kappa|x|}$ for some $\kappa > 1$, are trivial. Meshkov \cite{M91} disproved the conjecture by constructing a complex-valued potential $V$ and a nontrivial, complex-valued, bounded solution $u$ of \eqref{Eq:Sch} in $\R^2$ which decays as $\mathrm{e}^{-C |x|^{\frac{4}{3}}}$ for some $C > 0$. In fact, Meshkov completely addressed the conjecture by proving that the exponent $\frac{4}{3}$ is sharp in the sense that, if $u$ decays faster than $\mathrm{e}^{- |x|^{\frac{4}{3}+\varepsilon}}$ for some $\varepsilon >0$, then $u \equiv 0$ in $\R^n$. Motivated by the study of the problem of Anderson localization for the continuous Bernoulli model, Bourgain and Kenig derived a quantitative version of this property \cite{BK05}; using a Carleman-type inequality they obtained the following vanishing order estimate of any bounded normalized solution of \eqref{Eq:Sch}:
\begin{equation} \label{Eq:QUCE}
    M_R[u] \gtrsim  \mathrm{e}^{-C |x|^{\frac{4}{3}}\log|x|} \  \ \mbox{for some}  \ C>0 \ \ \mbox{independent of} \ R\gg1 \,, 
\end{equation}
which implies that if $u$ decays faster than $\mathrm{e}^{- \kappa |x|^{\frac{4}{3}}\log |x|}$ for some $\kappa\gg 1$ near infinity, then $u \equiv 0$ in $\R^n$. {Here we write $A\lesssim B$ to indicate that $A\le C B$ with a positive constant $C$ independent of significant quantities and $A\gg B$ means that there exists a big, universal, constant $C$ such that $A\geq C B$}. This ascertain the Landis conjecture with a stronger decay assumption on $u$ than what Landis proposed, but weaker than that of Meshkov. At this point, we would like to {emphasize the} {{most}} fundamental difference between Meshkov's result and that of Bourgain--Kenig's: Meshkov's result is qualitative in nature, i.e., it does not provide any vanishing  estimate of the solution, while Bourgain--Kenig's result gives us a quantitative unique continuation estimate \eqref{Eq:QUCE}. In the literature, we distinguish these two classes of results  by referring them as qualitative and quantitative Landis-type result.  Both qualitative and quantitative results show the limitations of \textit{Carleman's approach}, which does not distinguish between the real and complex case, and this led to rise the question whether Landis' conjecture is valid for the case in which the solution and the potential are real-valued {\cite[Question~1]{K05}, \cite[p. 28]{K07}}. 

A few qualitative Landis-type results are obtained in the real valued case for more general second order, linear, elliptic operator using different approaches such as ODE-techniques \cite{R21}, probabilistic tools \cite{ABG19}, comparison principle \cite{SS21}, and criticality theory \cite{DP24}. Although, some of these results are able to prove the conjecture in all dimension with the decay condition as Landis proposed, but they assume that the underlying operator is nonnegative. {It is also worth mentioning a recent result \cite{FK23} where the equation is considered in a cylinder $\R\times (0,2\pi)^n$ with periodic boundary conditions and the potential is assumed to be bounded and real-valued: it is shown in \cite{FK23} that the fastest rate of decay at infinity of non-trivial solutions is $O(e^{-c|w|})$ for $n=1,2$, and $O(e^{-c|w|^{4/3}})$ for $n\geq 3$, where $w$ is the axial variable. This result suggests that the weak version of Landis' conjecture might not be true in dimensions higher than $3$, and the right decay should match with the complex-valued case.}

On the other hand, there is a significant attention towards the quantitative Landis-type results which concern the improvement of Bourgain--Kenig's estimate \eqref{Eq:QUCE} in the real-valued case.  It seems that obtaining the quantitative Landis-type results are more challenging. To the best of our knowledge, the improved Bourgain--Kenig's estimate {in the real-valued case} are available only in dimension two. In dimension two,  Kenig--Silvestre--Wang \cite{KSW15} reduced the original equation with nonnegative potential $V$ to an inhomogeneous $\bar{\partial}$-equation, and then used the Carleman-type inequality for $\bar{\partial}$-operator along with Hadamard-type three-ball inequality to obtain an  improved  quantitative estimate in $\R^2$. They obtained the estimate \eqref{Eq:QUCE} with $\mathrm{e}^{-CR\log R}$ in the right hand side of it. In \cite{LMNN20}, Logunov--Malinnikova--Nadirashvili--Nazarov further improved the Kenig--Silvestre--Wang's estimate without the nonnegativity assumption on $V$  using quasiconformal mappings and the nodal structure of the solution. Using these approaches, many quantitative unique continuation estimates are derived in recent times for more general second-order elliptic operators, for instance, see \cite{EB23, BS23, DKW17} and the references therein.  

{{Similar}} unique continuation results have been studied in different settings. We refer to \cite{EKPV10} for the time-dependent Schrödinger equation, \cite{EKPV16} for the heat equation, {and to \cite{RW19} for a non local version}. Recently, it has also been investigated on certain Riemanian manifold \cite{PPV24} and in the discrete settings \cite{DKP25,FRS25}. {We refer to the nice overview by Malinnikova \cite{Ma23} and} the comprehensive survey \cite{FRS25b} on the Landis conjecture.

In this context, we consider here the Dirac equation \eqref{Eq:Dirac}.  A qualitative Landis-type result for this equation is obtained by Cassano in \cite{Ca22}. He proved that any solution $\mathbb{U}$ of \eqref{Eq:Dirac} for which $\mathrm{e}^{k |x|^{2}} \mathbb{U}$ is  $L^2$-integrable near infinity for all $k \in \mathbb{N}$ must be  compactly supported in $\R^n$. Hence, due to the unique continuation property of the Dirac operator \cite{HP76}, we can conclude that $\mathbb{U} \equiv 0$ in whole $\R^n$.    
Furthermore, Cassano provided examples to show that this decay criterion is sharp.  In this note, our goal is to obtain a quantitative Landis-type results for the Dirac equation \eqref{Eq:Dirac}. Towards this, we follow the Bourgain--Kenig’s approach of using the Carleman inequality for Dirac operators and obtain the following result.
\begin{theorem} \label{Thm:main}
    Let $\mathbb{U} \in H^1_{\operatorname{loc}}(\R^n,\mathbb{C}^{N})$ be a bounded solution to \eqref{Eq:Dirac} such that $|\mathbb{U}(0)|=1$ and $\mathbb{V}: \R^n \rightarrow  \mathbb{C}^{N \times N}$   be a bounded matrix-valued potential. Then there exists $\kappa>0$ such that
 \begin{align} \label{Est:Thm1}
     M_R[\mathbb{U}] \gtrsim  \mathrm{e}^{-\kappa R^{2} (\log R)^{2}}
 \end{align}
 for $R\gg1$.
\end{theorem}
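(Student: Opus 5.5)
The plan is to adapt the Bourgain--Kenig scheme, with the Laplacian replaced by the first order operator $\mathcal{D}_n$. The key tool is a Carleman inequality for $\mathcal{D}_n$ with a radial weight of Bourgain--Kenig type, $\phi(|x|)$ with $\phi(r) \approx \log r$ corrected by a slowly varying term. The target estimate has the form
\begin{align*}
\tau \,\big\| r^{-1}\mathrm{e}^{\tau\phi} f\big\|_{L^2} \lesssim \big\| \mathrm{e}^{\tau\phi}\mathcal{D}_n f\big\|_{L^2}
\end{align*}
for $f$ supported in an annulus $\{\varepsilon\le |x|\le 1\}$ and $\tau\gg1$. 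I would derive it from the Schrödinger Carleman estimate via $\mathcal{D}_n^2=-\Delta\,\mathrm{I}_N$, or directly by splitting the conjugated operator $\mathrm{e}^{\tau\phi}\mathcal{D}_n\mathrm{e}^{-\tau\phi}$ into symmetric and antisymmetric parts.

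The crucial difference from the Schrödinger case is this: a first order operator gains only one power of $\tau$, while the Laplacian gains $\tau^{3/2}$ against $\tau^2\|V\|$. So the potential term can be absorbed only if $\tau \gtrsim \|\mathbb{V}\|_{\infty}$ after rescaling. The rescaling is as follows. Fix $x_0$ with $|x_0|=R$ and set $\mathbb{U}_R(y)=\mathbb{U}(x_0+4Ry)$. Then $\mathbb{U}_R$ solves
\[
(\mathcal{D}_n+4R\,\mathbb{V}_R)\mathbb{U}_R=0,
\]
with potential of size $\sim R$. Absorption therefore forces $\tau\sim R$, but with the extra $r^{-1}$ weight near the point $y_0=-x_0/(4R)$ the effective requirement becomes $\tau \sim R\cdot R$. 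This is where the exponent $R^2$ rather than $R^{4/3}$ comes from; the $(\log R)^2$ factor comes from $\tau\log(1/\varepsilon)$ with $\varepsilon\sim 1/R$ and the logarithmic loss in the weight.

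The steps are then standard. Choose a cutoff $\eta$ equal to $1$ on $\{\varepsilon/2\cdot\ldots\}$, that is on an annulus around $y_0$ containing the point corresponding to $0$ (where $|\mathbb{U}_R|=1$), and vanishing near $y_0$ and near the outer boundary. Apply the Carleman estimate to $\eta\,\mathbb{U}_R$. Since
\[
\mathcal{D}_n(\eta\mathbb{U}_R)=-4R\,\mathbb{V}_R\,\eta\,\mathbb{U}_R+(\mathcal{D}_n\eta)\,\mathbb{U}_R,
\]
with $\mathcal{D}_n\eta$ acting as Clifford multiplication by $\nabla\eta$, the potential part is absorbed for $\tau\simeq CR^2\log R$. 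What remains are the cutoff-error terms on the inner annulus $|y-y_0|\sim\varepsilon$, where $\mathbb{U}_R$ is controlled by $M_R[\mathbb{U}]$ (with $\varepsilon\sim 1/R$ corresponding to the unit ball $B_1(x_0)$), and on the outer annulus, where $\mathbb{U}_R$ is bounded by $\|\mathbb{U}\|_\infty$.

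The left side must then be bounded from below by a neighbourhood of the image of $0$. This needs $\sup_{B_1(0)}|\mathbb{U}|\ge1$ converted to an $L^2$ lower bound on a small ball. Here I would invoke the local Hölder regularity result for weak solutions of the Dirac equation proved in the paper: $|\mathbb{U}(0)|=1$ and a uniform Hölder bound give $|\mathbb{U}|\ge 1/2$ on $B_c(0)$. Similarly, the sup in $M_R$ is converted into an $L^2$ quantity, trivially since $L^2\lesssim\sup$. Comparing the weights, $\mathrm{e}^{\tau\phi}$ is largest near $y_0$ and gives a factor $\varepsilon^{-\tau}\sim \mathrm{e}^{\tau\log R}$ against the region near the origin image. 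This yields
\[
1\lesssim \mathrm{e}^{C\tau\log R}M_R+\mathrm{e}^{-c\tau},
\]
and choosing $\tau\sim R^2\log R$ gives $M_R\gtrsim \mathrm{e}^{-\kappa R^2(\log R)^2}$.

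The main obstacle is proving the Dirac Carleman estimate with the right $\tau$-gain and the $r^{-1}$ weight, valid for the full matrix potential. The naive reduction via $\mathcal{D}_n^2=-\Delta$ fails because $\mathbb{V}$ does not commute with $\mathcal{D}_n$. I would first try a direct commutator computation in polar coordinates, writing
\[
\mathcal{D}_n=(\sigma\cdot\hat{x})\big(\partial_r+r^{-1}(\text{spin-orbit operator})\big),
\]
and exploiting the discrete spectrum of the spherical Dirac operator. To keep $\tau$ away from its eigenvalues, I would add a perturbation $\tau(\log r+\delta(\log r)^{\ldots})$ in the spirit of Bourgain--Kenig, paying a logarithm.
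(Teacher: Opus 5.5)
Your skeleton matches the paper's: translate $x_0$ to the origin, rescale, cut off, get a lower bound near the image of $0$ from H\"older continuity, control the inner annulus by $M_R$ and the outer one by $\|\mathbb{U}\|_\infty$. There is a genuine gap, though. The Carleman estimate you aim for is false, and your derivation of the $R^2$ exponent does not follow from it.

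\textbf{Why the estimate fails.} In $y=\log r$ we have $\mathcal{D}_n=\mathrm{e}^{-y}A[\partial_y+\frac{n-1}{2}-B]$ with $A$ unitary. Set $h=\mathrm{e}^{(n-2)y/2}\mathrm{e}^{\tau\phi}f$. Then your inequality is equivalent to $\tau\|h\|\lesssim\|(\partial_y+S(y))h\|$ in $L^2(\mathrm{d}y\,\mathrm{d}\omega)$, where $S(y)=\tfrac12-\tau\phi'(y)-B$ is self-adjoint. Integrating the cross term by parts gives
\[
\|(\partial_y+S)h\|^2=\|\partial_yh\|^2+\|Sh\|^2+\tau\int\phi''(y)\,|h|^2\,\mathrm{d}y\,\mathrm{d}\omega .
\]
The spectrum $\sigma(B)=\pm(\mathbb{N}_0+\frac{n-1}{2})$ has unit gaps on both sides. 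So for every $\tau$ and $y_*$ some eigenfunction $v_\mu$ satisfies $|S(y_*)v_\mu|\le\frac12|v_\mu|$. Test with $h=\chi(y)v_\mu$, where $\chi$ is a bump of width $(\tau\sup|\phi''|)^{-1/2}$ at $y_*$. This gives $\|(\partial_y+S)h\|^2\lesssim(1+\tau\sup|\phi''|)\|h\|^2$.

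Hence for a weight $\approx\log r$ with a slowly varying correction, the gain is at most $\sqrt{\tau\phi''}\ll\tau$. ``Keeping $\tau$ away from the spectrum'' buys only an $O(1)$ constant, never a power of $\tau$.

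\textbf{Where $R^2$ actually comes from.} It comes from this $\sqrt{\tau\phi''}$ gain, not from the factor $r^{-1}$. That factor is $\ge1$ on the support and only helps absorption. If your inequality held, $\tau\sim R\|\mathbb{V}\|_\infty$ would suffice. Your argument would then give $M_R[\mathbb{U}]\gtrsim\mathrm{e}^{-CR\log R}$, which contradicts the sharpness of the quadratic exponent (Cassano's examples).

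\textbf{The fix.} Use a weight that is genuinely convex in $\log r$. The paper simply quotes
\[
\tau\int\mathrm{e}^{\tau(\log|x|)^2}|x|^{-2}|u|^2\le\int\mathrm{e}^{\tau(\log|x|)^2}|\mathcal{D}_nu|^2 ,
\]
which is the identity above with $\phi''$ constant. So the ``main obstacle'' you flag is taken off the shelf. The argument then runs as follows:
\begin{enumerate}
\item With $\widetilde{\mathbb{U}}(x)=\mathbb{U}(ARx+x_0)$ the potential has size $AR$, and absorption needs $\tau\sim A^2R^2$.
\item The inner annulus $|x|\sim(AR)^{-1}$ carries weight at most $\mathrm{e}^{\tau(\log 4AR)^2}$.
\item The H\"older ball at $\tilde x_0=-x_0/(AR)$ carries weight at least $\mathrm{e}^{\tau(\log(A/2))^2}$.
\item The outer annulus has weight $\mathrm{e}^{\tau(\log 4)^2}$ and is absorbed by fixing $A$ large.
\end{enumerate}
The ratio $\mathrm{e}^{\tau[(\log 4AR)^2-(\log(A/2))^2]}$ gives $\mathrm{e}^{-\kappa R^2(\log R)^2}$.

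Your bookkeeping $\tau\sim R^2\log R$ against a loss $\tau\log R$ can also be recovered from the $\sqrt{\tau\phi''}$ gain, taking $\phi''\sim1/\log R$. It does not follow from the estimate you stated.

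\textbf{A geometric slip.} The weight must be singular at the image of $x_0$. In your rescaling that is the point $0$, and $B_1(x_0)$ becomes $\{|y|<1/(4R)\}$. It should not be singular at $y_0=-x_0/(4R)$, which is the image of the origin, where $|\mathbb{U}_R|=1$.
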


It is worth mentioning that the quadratic growth $R^{2}$ in the vanishing order estimate of $M_R[\mathbb{U}]$ is in fact sharp due to the result in {\cite[Theorem 1.5]{Ca22}}. Observe that if $\mathbb{U}$ decays as $\mathrm{e}^{-|x|^{2+\varepsilon}}$ near infinity for some $\varepsilon>0$, then by the above theorem $\mathbb{U} \equiv 0$ in $\R^n$. We do not need the unique continuation property to conclude this. We would like to highlight that our proof of above theorem following Bourgain-Kenig's approach requires the Hölder regularity of the solution $\mathbb{U}$, which is proved in Theorem \ref{thm:holder}. This regularity result is also of independent interest in its own right. 

In view of our earlier discussion on the improved Bourgain-Kenig's estimate for the real-valued solutions of  the Schrödinger equation \eqref{Eq:Sch}, it is reasonable to look for certain class of $\mathbb{V}, \mathbb{U}$ for which we can improve the vanishing order estimate \eqref{Est:Thm1} of the solution of the Dirac equation \eqref{Eq:Dirac}. In Theorem \ref{Thm:pot}, we obtain an improved  estimate of $M_R[\mathbb{U}]$ in two dimension under some symmetry assumptions on $\mathbb{V}$. While, in Theorem \ref{Thm:Real}, we found the same for real-valued solutions of \eqref{Eq:Dirac} in two dimension. {The latter case is relevant within the context of, e.g., Majorana equations.}

Finally, we also consider  bounded potential $\mathbb{V}$ that has some decay, say, $|\mathbb{V}(x)| \lesssim |x|^{-\la}$ near infinity for some $\lambda >0$. In this case, we are interested in deriving a qualitative Landis-type result capturing this decay behavior of the potential. The case $\lambda <1$ {was investigated in \cite{Ca22}, hence allowing $\lambda$ to be negative,} in which case the potential is not bounded. We complement the result in \cite{Ca22} by addressing the case $\lambda\geq 1$ in Theorem \ref{Thm:qual1}, \ref{Thm:qual2}. 

{A remark about notation: along the paper, we will not use different notations for scalar, vector, or matrix quantities. E.g., we will denote by $0$ either the scalar $0$, or the null vector in $\R^n$, or the null vector in $\mathbb{C}^N$. Each case will be clear from the context and from the definition of the functions and variables involved.}

\section{Preliminaries}\label{Sec:Prelim}
In this section, we recall some properties of the Dirac operators, notion of weak solution, some regularity results, and a Carleman inequality for Dirac operators.

\subsection{Dirac operator} The $n$-dimensional mass-less Dirac operator $\mathcal{D}_n$ is a first order differential operator defined by $$\mathcal{D}_n^2=(-\Delta)I_{N \times N} \,,$$
where $N=2^{\frac{n}{2}}$ or $N=2^{\frac{(n+1)}{2}}$ depending on whether $n$ is even or odd. In two and three dimension, we can write it in terms of the {\it Pauli matrices}
\begin{align*}
\sigma_1:=  \begin{pmatrix}
0 & 1 \\
1 & 0 
\end{pmatrix} \,, \ \   \sigma_2:=  \begin{pmatrix}
0 & -i \\
i & 0 
\end{pmatrix} \,, \ \   \sigma_3:=  \begin{pmatrix}
1 & 0 \\
0 & -1 
\end{pmatrix} \,.
\end{align*}
The two dimensional Dirac operator can be written as follows:
\begin{align*}
    \mathcal{D}_2 = -i \sigma_1 \partial_1 -i \sigma_2 \partial_2 = \begin{pmatrix}
0 & -2 i \partial_z \\
-2 i \partial_{\bar{z}} & 0 
\end{pmatrix} \,,
\end{align*}
where $2\partial_z := \partial_x - i \partial_y$, $2\partial_{\bar{z}} := \partial_x + i \partial_y$ and we use the identification $(x,y) \in \mathbb{R}^2$ with $z=x+iy \in \mathbb{C}$. The complex conjugate of $z$ is denoted by $\bar{z}:=x-iy$. In three dimension, the Dirac operator is represented as below:
\begin{align*}
    \mathcal{D}_3= -i \sum_{j=1}^3 \alpha_j \  \partial_j   \,,  \ \ \ \ \ \alpha_j:=\begin{pmatrix}
0 & \sigma_j \\
\sigma_j & 0 
\end{pmatrix} \,.
\end{align*}
In general dimension, $$\mathcal{D}_n:= -i\sum_{j=1}^n \alpha_j \partial_j \,,$$
where $\alpha_j$ are $N \times N$ Hermitian constant matrices of order $N$ and  $\alpha_j$ satisfy the Clifford’s anticommutation rule
$$ \alpha_j \alpha_k + \alpha_k \alpha_j = 2 \delta_{jk}I_{N \times N}, $$
for any $j,k=1,2,....n$.
Note that the Dirac operator $\mathcal{D}_n$ is a self adjoint operator on $L^2(\R^n,\mathbb{C}^{N})$ with the domain $H^1(\R^n,\mathbb{C}^{N})$.

\begin{definition}[Weak Solution] \label{Def:WS}
Let $\mathbb{V}:\R^n \rightarrow \mathbb{C}^{N \times N}$ be a bounded matrix-valued potential on $\mathbb{R}^n$. Then $\mathbb{U} \in L^2_{\operatorname{loc}}(\R^n)$ is said to be a weak solution of $$(\mathcal{D}_n + \mathbb{V}) \varphi =0 \ \ \mbox{in} \ \R^n$$
if and only if
$$\int_{\R^n} \left( \langle \mathbb{U}, \mathcal{D}_n \phi \rangle_{\mathbb{C}^N} + \langle \mathbb{U},  \mathbb{V} \phi\rangle_{\mathbb{C}^N} \right) \ \dx =0 $$
i.e., $$ \sum_{j=1}^n i\bar{\alpha}_j \left(\int_{\R^n} ( \partial_j \bar{\phi})\cdot \mathbb{U}  \  \dx \right) = - \int_{\R^n}  \mathbb{U}  \cdot \bar{\mathbb{V}}\bar{\phi} \ \dx $$
for all $\phi \in C_c^{\infty}(\R^n; \mathbb{C}^N)$. {Here $\langle \cdot,\cdot\rangle_{\mathbb{C}^N}$ represents the standard inner-product in $\mathbb{C}^N$ and $A \cdot B:= \sum_{i=1}^N A_iB_i$ is the usual dot product for elements $A,B \in \mathbb{C}^N$.}
\end{definition}

\begin{remark} \rm 
We may equivalently define the weak solution by taking $\mathbb{U} \in H^1_{\operatorname{loc}}(\R^n)$ {{a priori}} because of    the fact that $\mathbb{U} \in L^2_{\operatorname{loc}}(\R^n)$ and $\mathcal{D}_n\mathbb{U} \in L^2_{\operatorname{loc}}(\R^n)$ together implies $\mathbb{U} \in H^1_{\operatorname{loc}}(\R^n)$. This follows from the following inequality:
$$\|\nabla \phi\|_{L^2(\R^n)} \lesssim \|\mathcal{D}_n \phi\|_{L^2(\R^n)} $$
for all $\phi \in C_c^{\infty}(\R^n; \mathbb{C}^N)$, see {Lemma \ref{lem:globalLp}} below.
\end{remark}


\subsection{Carleman inequality}
We recall the main ingredient of Bourgain--Kenig's approach of obtaining a vanishing order estimate, namely the Carleman inequality for Dirac operators. For a proof we refer to  \cite[(4.7)]{En15}.
\begin{proposition}  \cite[(4.7)]{En15} \label{Prop:CI} For $\tau > 0$, we have
\begin{align} \label{Eq:CI}
    \tau \int_{\R^n}  \frac{\mathrm{e}^{\tau (\log |x|)^2}}{|x|^2} |u(x)|^2 \dx \leq \int_{\R^n}\mathrm{e}^{\tau (\log |x|)^2} |\mathcal{D}_n u|^2 \dx
\end{align}
 for all $u \in C^{\infty}_c(\R^n \setminus \{0\},\mathbb{C}^{N})$.   
\end{proposition}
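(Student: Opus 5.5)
The approach is to reduce \eqref{Eq:CI} to a one-dimensional weighted inequality in the logarithmic radial variable, after splitting $\mathcal{D}_n$ into radial and angular parts. The identity to use is the standard polar decomposition of the Dirac operator. Write $x=r\omega$ with $r=|x|$, $\omega\in\mathbb{S}^{n-1}$, and set $\alpha_r:=\sum_j \alpha_j \omega_j$. Then
\begin{align*}
\mathcal{D}_n=-i\alpha_r\Big(\partial_r+\frac{n-1}{2r}-\frac{1}{r}K\Big),
\end{align*}
where $K$ is the spin-orbit operator on $L^2(\mathbb{S}^{n-1};\mathbb{C}^N)$. The facts needed are:
\begin{itemize}
\item $K$ is self-adjoint, acts only in $\omega$, and commutes with $\partial_r$;
\item $K$ anticommutes with $\alpha_r$;
\item $K$ has pure point spectrum contained in $\{\pm(k+\tfrac{n-1}{2}):k\in\N\}$;
\item $\alpha_r$ is Hermitian and unitary, since $\alpha_r^2=I$ by the Clifford relations.
\end{itemize}
Because $\alpha_r$ is unitary, it disappears from $|\mathcal{D}_n u|^2$ pointwise.

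\textbf{Reduction to a radial problem.} Change variables $t=\log r$ and set $u(x)=r^{-\frac{n-2}{2}}v(t,\omega)$. Since $\dx=r^{n}\,\d t\,\d\omega$, the two sides of \eqref{Eq:CI} transform as follows. The left side becomes $\tau\int\!\!\int e^{\tau t^2}|v|^2\,\d t\,\d\omega$. For the right side, $r\,\mathcal{D}_n u = -i\alpha_r\, r^{-\frac{n-2}{2}}\big(\partial_t + \tfrac12 - K\big)v$, so
\begin{align*}
\int_{\R^n}\mathrm{e}^{\tau(\log|x|)^2}|\mathcal{D}_n u|^2\dx=\int_{\R}\int_{\mathbb{S}^{n-1}} e^{\tau t^2}\big|(\partial_t+\tfrac12-K)v\big|^2\,\d\omega\,\d t.
\end{align*}
I would double-check the constant $\tfrac12$ here, but its exact value is irrelevant to what follows. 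Since $u\in C_c^\infty(\R^n\setminus\{0\})$, the function $v$ is compactly supported in $t$.

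\textbf{Spectral decomposition.} Expand $v(t,\cdot)=\sum_\mu v_\mu(t)\Phi_\mu$ in an orthonormal eigenbasis of $K$, with $K\Phi_\mu=\mu\Phi_\mu$. By Parseval on the sphere, and because $K$ commutes with $\partial_t$ and with the weight $e^{\tau t^2}$, the inequality decouples into one scalar estimate per eigenvalue. With $c_\mu:=\tfrac12-\mu$, it suffices to show, for every real constant $c$ and every $f\in C_c^\infty(\R)$,
\begin{align*}
\tau\int_{\R} e^{\tau t^2}|f|^2\,\d t\le \int_{\R} e^{\tau t^2}|f'+cf|^2\,\d t .
\end{align*}

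\textbf{The one-dimensional estimate.} Conjugate once more by writing $f=e^{-\tau t^2/2}g$. Then $e^{\tau t^2/2}(f'+cf)=g'+(c-\tau t)g$. Expanding the square gives
\begin{align*}
\|g'+(c-\tau t)g\|^2=\|g'\|^2+\|(c-\tau t)g\|^2+2\,\mathrm{Re}\int_{\R} \overline{g'}\,(c-\tau t)g\,\d t .
\end{align*}
For the cross term, $2\,\mathrm{Re}(\overline{g'}g)=(|g|^2)'$. Integrating by parts, with no boundary terms because $g$ has compact support,
\begin{align*}
2\,\mathrm{Re}\int_{\R} \overline{g'}\,(c-\tau t)g\,\d t=-\int_{\R}(c-\tau t)'|g|^2\,\d t=\tau\int_{\R}|g|^2\,\d t .
\end{align*}
Dropping the two nonnegative squares yields $\|g'+(c-\tau t)g\|^2\ge\tau\|g\|^2=\tau\int e^{\tau t^2}|f|^2\,\d t$, which is the desired scalar estimate. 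The bound is uniform in $c$, so summing over $\mu$ gives \eqref{Eq:CI}. This step is exactly the commutator of $\partial_t$ with the linear weight $\tau t$: it produces the gain $\tau$ and needs no information about the spectrum of $K$.

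\textbf{Main obstacle.} The delicate part is the polar decomposition itself. One must verify the algebraic properties of $K$ listed above from the Clifford relations, in particular that $K$ anticommutes with $\alpha_r$ and is self-adjoint on the sphere. One must also verify that the change of variables produces no leftover first-order angular terms. If setting this up directly proves cumbersome, I would instead work in Cartesian coordinates. Expand $\|(\mathcal{D}_n+i\alpha\cdot\nabla\phi)w\|^2$ with $\phi=\tfrac{\tau}{2}(\log r)^2$ and $w=e^{\phi}u$, and compute the commutator $[\mathcal{D}_n,i\alpha\cdot\nabla\phi]$. Its main part is $\Delta\phi$; the remaining term, built from $[\alpha_j,\alpha_k]$, must be controlled by the $\|\mathcal{D}_n w\|^2$ and $\|(\alpha\cdot\nabla\phi)w\|^2$ terms. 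This route is more error-prone, which is why I prefer the radial reduction.
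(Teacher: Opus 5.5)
Your proof is correct, and it takes a route the paper does not: the paper gives no argument for Proposition~\ref{Prop:CI} and simply cites \cite[(4.7)]{En15}. Your proposal therefore supplies a self-contained derivation, built from the same tools the paper uses later for Theorem~\ref{Thm:qual2}. Your polar formula is \eqref{Eq:Fourier} with $K=B$ and $A=-i\alpha_r$. Your constant $\tfrac12=\tfrac{n-1}{2}-\tfrac{n-2}{2}$ is right. Your one-dimensional step is the Gaussian-weight analogue of Proposition~\ref{Thm:CI-1}. What this buys over the citation is transparency and the sharp constant $1$, via the exact identity $\|g'+(c-\tau t)g\|^2=\|g'\|^2+\|(c-\tau t)g\|^2+\tau\|g\|^2$.

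The ``main obstacle'' you flag is smaller than you fear. The anticommutation $K\alpha_r=-\alpha_rK$ and the location of $\sigma(K)$ are never used. Incidentally, the eigenvalues are $\pm(k+\tfrac{n-1}{2})$ with $k\in\mathbb{N}_0$, so $k=0$ must be included, though this is irrelevant here. You only need two facts: $\alpha_r$ is pointwise unitary, and $K$ is symmetric on $L^2(\mathbb{S}^{n-1};\mathbb{C}^N)$, which is what makes $c_\mu$ real in your integration by parts. Both come from a two-line computation. Writing $\mathcal{D}_n=\alpha_r^2\mathcal{D}_n$ and using $\alpha_k\alpha_j=\delta_{jk}I+\tfrac12[\alpha_k,\alpha_j]$ gives $K=\tfrac{n-1}{2}-\sum_{k<j}\alpha_k\alpha_j(\omega_k\partial_{\omega_j}-\omega_j\partial_{\omega_k})$. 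Each term is an anti-Hermitian constant matrix times a commuting skew-symmetric rotation field, so $K$ is symmetric. This also makes the Cartesian fallback unnecessary.

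Finally, you can drop the eigen-expansion altogether. Set $g=e^{\tau t^2/2}v$ and $S=\tfrac12-K-\tau t$. Then $S$ is symmetric on $L^2(\R\times\mathbb{S}^{n-1})$, and $[S,\partial_t]=\tau$ because $K$ does not involve $t$. The same expansion then gives $\|(\partial_t+S)g\|^2=\|\partial_tg\|^2+\|Sg\|^2+\tau\|g\|^2$ directly, so completeness of the eigenbasis is not needed.
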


\subsection{Quantitative unique continuation estimate for $\bar{\partial}$-equation}
Observe that the Dirac equation \eqref{Eq:Dirac} in two dimension reduces to a system of ODE as follows 
\begin{align} \label{Eq:sys}
 i \bar{\partial} \mathbb{U}_1 & = V_{21} \mathbb{U}_1  +  V_{22} \mathbb{U}_2 \nonumber \\
    i \partial \mathbb{U}_2 &= V_{11} \mathbb{U}_1  +  V_{12} \mathbb{U}_2 \,,
\end{align}
where $\mathbb{U}=(\mathbb{U}_1,\mathbb{U}_2)$, {$\mathbb{V}=(V_{ij})_{2\times 2}$}, and $\partial:= \partial_x-i\partial_y= 2\partial_{{z}}$ and $\bar{\partial}:= \partial_x+i\partial_y=2\partial_{\bar{z}}$. In this view, it is natural to anticipate that the quantitative unique continuation estimate for the so-called $\bar{\partial}$-equation may help us in deriving the same for the Dirac equation.
The next proposition is due to Kenig--Silvestre--Wang \cite[Theorem 2.1]{KSW15}, which will play the key role for obtaining improved estimates in Theorems \ref{Thm:pot}, \ref{Thm:Real}. 
\begin{proposition}  \label{Prop:KSW}{\cite[Theorem 2.1]{KSW15}}
Let $u$ be any solution of 
$$\bar{\partial}u = Vu \ \ \mbox{in} \  \R^2,$$
where $V \in L^{\infty}(\R^2; \mathbb{C}) $. Assume that {$|u(z)| \leq  \mathrm{e}^{C|z|}$ for some $C >0$ and $|u(0)|=1$. Then there exists $\kappa=\kappa(C, \|V\|_{L^{\infty}(\R^2)})>0$} such that
$$M_R[u] \gtrsim \mathrm{e}^{-\kappa R \log R}$$
for $R\gg 1$.

\end{proposition}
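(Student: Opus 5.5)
The plan is to remove the potential by the similarity principle, reducing to a holomorphic function, and then apply Hadamard's three-circle theorem at the scale $R$. Fix $x_0$ with $|x_0|=R\gg1$. Consider the disc $D:=B_{6R}(x_0)$ and define the Cauchy transform
$$w(z):=\frac{1}{\pi}\int_{D}\frac{V(\zeta)}{z-\zeta}\,\mathrm{d}A(\zeta).$$
Recall that with $2\partial_{\bar z}=\bar\partial$, the kernel $\frac{1}{\pi z}$ is the fundamental solution of $\partial_{\bar z}$. So, up to the harmless normalising factor coming from $\bar\partial=2\partial_{\bar z}$, we get $\bar\partial w = V$ on $D$ in the weak sense. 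Since $\int_D|z-\zeta|^{-1}\,\mathrm{d}A(\zeta)\lesssim R$ uniformly in $z$, we have the bound
$$\|w\|_{L^\infty(D)}\le C_0 R\,\|V\|_{L^\infty(\R^2)}.$$
Set $h:=u\,\mathrm{e}^{-w}$. Then $\bar\partial h=\mathrm{e}^{-w}(\bar\partial u-Vu)=0$ weakly on $D$, so by Weyl's lemma $h$ is holomorphic in $D$.

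Next I would transfer the information on $u$ to $h$, using $|\mathrm{e}^{\pm w}|\le \mathrm{e}^{C_0R\|V\|_\infty}$. This gives three estimates:
\begin{itemize}
\item On $B_{5R}(x_0)\subset B_{6R}(0)$, the growth hypothesis gives $|h|\le \mathrm{e}^{C_1R}$.
\item At the origin, which lies in $D$, we get $|h(0)|\ge \mathrm{e}^{-C_0R\|V\|_\infty}=\mathrm{e}^{-C_1R}$.
\item On the small disc, $\sup_{B_{1/2}(x_0)}|h|\le \mathrm{e}^{C_1R}\, m$, where $m:=\sup_{B_1(x_0)}|u|$.
\end{itemize}
Here $C_1$ depends only on $C$ and $\|V\|_\infty$.

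Now apply the three-circle theorem to $h$ with centre $x_0$ and radii $r_1=\tfrac12$, $r_2=R$, $r_3=4R$. Write $M(r):=\max_{|z-x_0|=r}|h|$. The origin lies on the middle circle, since $|0-x_0|=R$, so $M(R)\ge|h(0)|\ge \mathrm{e}^{-C_1R}$. Log-convexity of $M$ in $\log r$ gives
$$\log M(R)\le \theta\log M(\tfrac12)+(1-\theta)\log M(4R),\qquad \theta=\frac{\log 4}{\log(8R)}\sim\frac{c}{\log R}.$$
Inserting the three bounds yields
$$-C_1R\le \theta\,(\log m + C_1R)+(1-\theta)C_1R.$$
Hence $\theta\log m\ge -2C_1R$, that is, $\log m\ge -\frac{2C_1}{\log 4}\,R\log(8R)$. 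Since $x_0$ with $|x_0|=R$ was arbitrary, taking the infimum gives $M_R[u]\gtrsim \mathrm{e}^{-\kappa R\log R}$ with $\kappa=\kappa(C,\|V\|_\infty)$.

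The only delicate point is the first step. We need a solution $w$ of $\bar\partial w=V$ that is bounded by $O(R)$ on a disc of radius comparable to $R$ containing both $x_0$ and $0$. This is exactly what localising the Cauchy transform to $D$, rather than integrating over all of $\R^2$, provides: it keeps the correction factor $\mathrm{e}^{\pm w}$ at the same exponential size $\mathrm{e}^{O(R)}$ as the assumed growth of $u$. Once this holds, the loss of $\log R$ comes purely from the ratio of radii in the three-circle inequality.
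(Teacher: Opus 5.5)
Your proof is correct and complete. The localized Cauchy transform gives $\|w\|_{L^\infty(\R^2)}\lesssim R\,\|V\|_{L^\infty(\R^2)}$, since $\int_D|z-\zeta|^{-1}\,\mathrm{d}A(\zeta)\le 12\pi R$ for every $z$. Hence $h=u\,\mathrm{e}^{-w}$ is holomorphic in $B_{6R}(x_0)$ and obeys your three bounds. The three-circle inequality with radii $\tfrac12$, $R$, $4R$, with the origin on the middle circle, then gives $\log m\ge -\frac{2C_1}{\log 4}R\log(8R)$ uniformly in $x_0$.

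Two small points should be written out. First, with the paper's convention $\bar\partial=\partial_x+i\partial_y=2\partial_{\bar z}$, the kernel should be $\frac{1}{2\pi(z-\zeta)}$. Second, the product rule for $u\,\mathrm{e}^{-w}$ is legitimate because $w$ is continuous and lies in $W^{1,p}_{\mathrm{loc}}$ for every $p<\infty$; indeed $\partial_z w$ is the Beurling transform of $V\chi_D$ (up to the factor $\tfrac12$ above). As a by-product, $u=\mathrm{e}^{w}h$ is continuous, so the normalization $|u(0)|=1$ is meaningful.

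The paper gives no proof of its own here. The proposition is imported from \cite[Theorem 2.1]{KSW15}, and the introduction describes the Kenig--Silvestre--Wang method as a Carleman inequality for $\bar\partial$ combined with a Hadamard-type three-ball inequality. Your route needs no Carleman estimate. The similarity principle removes the potential at a cost of only $\mathrm{e}^{O(R)}$, the same size as the assumed growth $\mathrm{e}^{C|z|}$ on a disc of radius comparable to $R$. The three-ball step then becomes the classical three-circle theorem for a holomorphic function.

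What this buys is an elementary, self-contained argument with explicit constants. It also shows clearly where each factor comes from: the $\log R$ loss comes solely from the ratio $8R$ of outer to inner radius, and the linear factor $R$ from the $\mathrm{e}^{O(R)}$ cost of both the growth bound and the gauge factor $\mathrm{e}^{\pm w}$. Like any $\bar\partial$-based argument it is tied to the complex structure of the plane, but that is all that is needed for its use in Theorems \ref{Thm:pot} and \ref{Thm:Real}.
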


\section{Hölder regularity}
In this section, we prove the Hölder regularity of the solution of the Dirac equation \eqref{Eq:Dirac}, which is required for our main proof. This regularity result is also of independent interest; for this reason, we establish the Hölder regularity for general Dirac operator with mass. 
Define the massive Dirac operator by
\[
\mathcal{D}_{n,m} := \mathcal{D}_n + m \beta,\qquad m \in\R \,,
\]
where $\beta\in\mathbb{C}^{N\times N}$ is a constant matrix satisfying the relation \[
\alpha_j\beta+\beta\alpha_j=0,\qquad
\beta^2=I.
\]
{As usual, given an open set $\Omega \subseteq \R^n$, the H\"older seminormm of $\psi$ is given by 
$$
[\psi]_{C^{0,\alpha}(\Omega)}:=\sup_{x,y\in \Omega,\, x\neq y}\frac{|\psi(x)-\psi(y)|}{\|x-y\|^\alpha}.
$$}


\begin{thm}[Local H\"older regularity]\label{thm:holder}
Let $n\geq 2$, $m\in\R$, and $\mathbb{V} \in L^\infty(\R^n;\mathbb{C}^{N\times N})$.
If $\psi\in L^2_{\mathrm{loc}}(\R^n;\mathbb{C}^N)$ is a distributional solution of 
\begin{equation}\label{eq:main}
(\mathcal{D}_{n,m} + \mathbb{V})\psi = 0 \quad\text{in }\R^n,
\end{equation}
then $\psi\in C^{0,\alpha}_{\mathrm{loc}}(\R^n;\mathbb{C}^N)$ for some $\alpha\in(0,1)$.
More precisely, for every ball $B_{2R}(x_0)\subset\R^n$ and every $p\in(1,\infty)$,
\[
\|\psi\|_{W^{1,p}(B_R(x_0))}
\le C\,(1+|m|+\|V\|_\infty)\,\|\psi\|_{L^p(B_{2R}(x_0))},
\]
where $C=C(n,p)$ is independent of $x_0,R,\psi$.
Consequently, choosing any $p>n$ and setting $\alpha=1-\frac{n}{p}$, we have
\[
\psi\in C^{0,\alpha}(B_R(x_0)) \quad\text{and}\quad
[\psi]_{C^{0,\alpha}(B_R(x_0))}\le C'(n,p)\,(1+|m|+\|V\|_\infty)\,\|\psi\|_{L^p(B_{2R}(x_0))}.
\]
\end{thm}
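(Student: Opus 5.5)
The plan is to treat \eqref{eq:main} as a first-order elliptic system and use the identity $\mathcal{D}_n^2=-\Delta I_{N\times N}$ to reduce everything to Calder\'on--Zygmund estimates for the Laplacian. The key global tool is the bound $\|\nabla\phi\|_{L^p(\R^n)}\le C(n,p)\|\mathcal{D}_n\phi\|_{L^p(\R^n)}$ for $\phi\in C_c^\infty(\R^n;\mathbb{C}^N)$ and $1<p<\infty$ (Lemma \ref{lem:globalLp}). I would prove it by writing $\partial_k\phi=\partial_k(-\Delta)^{-1}\mathcal{D}_n(\mathcal{D}_n\phi)=\sum_j -i\,\alpha_j R_kR_j(\mathcal{D}_n\phi)$, where the $R_j$ are Riesz transforms, and then invoking their $L^p$-boundedness. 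By density the bound extends to compactly supported $W^{1,p}$ functions, and also to $L^p$ functions with compact support whose distributional $\mathcal{D}_n$ lies in $L^p$: one mollifies and applies the estimate uniformly.

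The equation gives $\mathcal{D}_n\psi=-(m\beta+\mathbb{V})\psi=:F$, and $|F|\le(|m|+\|V\|_\infty)|\psi|$ pointwise, since $\beta$ is unitary because $\beta^2=I$ and $\beta$ is Hermitian. We localize with a cutoff $\eta\in C_c^\infty(B_{2R}(x_0))$ satisfying $\eta=1$ on $B_R(x_0)$ and $|\nabla\eta|\lesssim 1/R$. The Leibniz rule gives
\[
\mathcal{D}_n(\eta\psi)=\eta F-i\sum_j(\partial_j\eta)\alpha_j\psi .
\]
Applying the global estimate yields $\|\nabla(\eta\psi)\|_{L^p}\lesssim (|m|+\|V\|_\infty+R^{-1})\|\psi\|_{L^p(B_{2R})}$. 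The difficulty is that $\psi$ is a priori only in $L^2_{\rm loc}$, so the $L^p$ norm on the right may be infinite, and for $p<2$ the argument needs care as well.

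Integrability therefore has to be bootstrapped, and I expect this to be the main obstacle. Starting from $\psi\in L^2_{\rm loc}$, the localized identity shows that $\eta\psi\in W^{1,2}$. Sobolev embedding then gives $\psi\in L^{2^*}_{\rm loc}$ on a slightly smaller ball. We repeat the step on a nested sequence of balls between $B_R$ and $B_{2R}$, with finitely many steps depending only on $n$ and $p$, until we reach $L^p_{\rm loc}$ for any $p<\infty$, and after that $W^{1,p}_{\rm loc}$.

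Two technical points arise in this bootstrap. First, to apply the estimate to the merely distributional object $\eta\psi$, I would mollify: $\mathcal{D}_n(\eta\psi)_\varepsilon=(\mathcal{D}_n(\eta\psi))_\varepsilon$, the right-hand side is bounded in $L^{q}$, and we pass to the limit. Second, the cutoff gradients grow along the nested balls, contributing a factor like $R^{-1}$ per step; scaling $x\mapsto x_0+Rx$ keeps track of this. The statement writes a constant independent of $R$. That is consistent once one either takes $R\lesssim 1$ or absorbs the factor $R^{-1}$ by the scaling, so I would state the estimate for unit-scale balls and remark on the general scaling.

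Once the estimate $\|\psi\|_{W^{1,p}(B_R)}\le C(1+|m|+\|V\|_\infty)\|\psi\|_{L^p(B_{2R})}$ holds for some $p>n$, Morrey's inequality gives $\psi\in C^{0,1-n/p}(B_R)$, with seminorm controlled by $\|\nabla\psi\|_{L^p(B_R)}$ and hence by the same right-hand side. This yields the stated Hölder bound with $\alpha=1-\frac{n}{p}$.
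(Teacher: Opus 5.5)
Your argument is essentially the paper's proof. You prove the global bound $\|\nabla\phi\|_{L^p}\lesssim\|\mathcal{D}_n\phi\|_{L^p}$ via $\partial_k=-i\sum_j\alpha_jR_kR_j\mathcal{D}_n$, which is the same multiplier $i\xi_k(\alpha\cdot\xi)/|\xi|^2$ the paper treats with Mihlin; then you use the cutoff identity for $\mathcal{D}_n(\eta\psi)$, a Sobolev bootstrap from $L^2_{\rm loc}$, and Morrey, and your mollification step justifies applying the $C_c^\infty$ estimate to $\eta\psi$, which the paper skips.

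One correction to your side remark: the $R^{-1}$ term is harmless for $R\gtrsim 1$, not for $R\lesssim 1$, and scaling cannot remove it. The paper's own \eqref{eq:keyWp} keeps this term before silently dropping it. For example, $\psi=(z,0)$ solves $\mathcal{D}_2\psi=0$ and gives $\|\nabla\psi\|_{L^p(B_R)}/\|\psi\|_{L^p(B_{2R})}\sim R^{-1}$. So the $R$-independent bound should be stated for $R\ge 1$, or with the $R^{-1}$ term kept.
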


The proof has {three parts:
(i) a global Calder\'on--Zygmund estimate (ellipticity in the symbol sense) for the massless Dirac operator; (ii) a local $W^{1,p}$ estimate for the massive Dirac operator;
(iii) a bootstrap in $p$ using Sobolev/Morrey embeddings.}
\begin{proof}[Proof of Theorem \ref{thm:holder}]

\noindent
\textbf{Step 1: A global Calder\'on--Zygmund estimate for $\mathcal{D}_{n}$.}

\begin{lem}[Global $L^p$ estimate for compactly supported spinors]\label{lem:globalLp}
Let $1<p<\infty$ and $\varphi\in C_c^\infty(\R^n;\mathbb{C}^N)$.
Then
\begin{equation}\label{eq:globalCZ}
\|\nabla \varphi\|_{L^p(\R^n)} \le C_{n,p}\,\|\mathcal{D}_{n} \varphi\|_{L^p(\R^n)}.
\end{equation}
\end{lem}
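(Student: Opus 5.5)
The plan is to reduce \eqref{eq:globalCZ} to the classical $L^p$ boundedness of the Riesz transforms, using the identity $\mathcal{D}_n^2=(-\Delta)I_{N\times N}$. Since $\varphi\in C_c^\infty$, every object below is a Schwartz-class or $L^p$ function, so the Fourier transform can be used freely. With $\widehat{\varphi}(\xi)=\int e^{-ix\cdot\xi}\varphi(x)\,\dx$ we have
\[
\widehat{\mathcal{D}_n\varphi}(\xi)=\Big(\sum_{j=1}^n \alpha_j\xi_j\Big)\widehat{\varphi}(\xi)=:A(\xi)\widehat\varphi(\xi).
\]
The Clifford relations give $A(\xi)^2=|\xi|^2 I_{N\times N}$, so $A(\xi)$ is invertible for $\xi\neq0$ with $A(\xi)^{-1}=A(\xi)/|\xi|^2$. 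Hence, for every $k$,
\[
\widehat{\partial_k\varphi}(\xi)=i\xi_k\widehat\varphi(\xi)=\frac{i\xi_k}{|\xi|^2}A(\xi)\widehat{\mathcal{D}_n\varphi}(\xi)=\sum_{j=1}^n \alpha_j\,\frac{i\xi_k\xi_j}{|\xi|^2}\,\widehat{\mathcal{D}_n\varphi}(\xi).
\]

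The multiplier $-\xi_k\xi_j/|\xi|^2$ is exactly the symbol of $R_kR_j$, where $R_j$ is the Riesz transform with symbol $-i\xi_j/|\xi|$. This yields the pointwise-in-frequency identity
\[
\partial_k\varphi=-i\sum_{j=1}^n\alpha_j R_kR_j(\mathcal{D}_n\varphi),
\]
where the Riesz transforms act componentwise on $\mathbb{C}^N$-valued functions and the constant matrices $\alpha_j$ commute with them.

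To pass from this identity to the estimate, the one point needing care is the meaning of the identity, and I expect it to be routine. Because $\mathcal{D}_n\varphi\in C_c^\infty\subset L^2\cap L^p$, the identity holds in $L^2$ via Plancherel, so it holds almost everywhere. The multiplier $\xi_k\xi_j/|\xi|^2$ is bounded, so there is no low-frequency issue.

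The boundedness of $R_j$ on $L^p(\R^n)$ for $1<p<\infty$ is classical Calder\'on--Zygmund theory. Alternatively, it follows from the Mikhlin multiplier theorem, since $\xi_k\xi_j/|\xi|^2$ is smooth on the sphere and homogeneous of degree $0$. Using also $\|\alpha_j\|=1$, this gives
\[
\|\partial_k\varphi\|_{L^p}\le\sum_j\|R_kR_j\mathcal{D}_n\varphi\|_{L^p}\le n\,C_p^2\|\mathcal{D}_n\varphi\|_{L^p}.
\]
Summing over $k$, and noting that all norms on $\mathbb{C}^N$ are equivalent, yields \eqref{eq:globalCZ} with $C_{n,p}$ depending only on $n$, $p$ and $N=N(n)$.
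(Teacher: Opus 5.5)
Your proof is correct and follows the paper's argument: you invert the symbol $\sum_j\alpha_j\xi_j$ using the Clifford relations, then bound the resulting degree-zero homogeneous multiplier $M_k(\xi)=i\xi_k(\alpha\cdot\xi)/|\xi|^2$ on $L^p$. The only differences are cosmetic: you expand $M_k(\xi)=\sum_j\alpha_j\, i\xi_k\xi_j/|\xi|^2$ into constant matrices times scalar double Riesz transforms, so componentwise scalar Riesz/Mihlin bounds replace the vector-valued Mihlin theorem the paper invokes, and you spell out the $L^2$/Plancherel justification of the frequency-side identity.
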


\begin{proof}
Take Fourier transforms (componentwise). The symbol of $\mathcal{D}_{n}$ is, for {$\xi=(\xi_1,\ldots, \xi_n)$, $\xi_j\in \mathbb{C}^N$,}
\[
\sigma_{\mathcal{D}_{n}}(\xi)=\alpha\cdot\xi:=\sum_{j=1}^n\alpha_j\xi_j.
\]
By Clifford relations, for all $\xi\neq 0$, 
\[
(\alpha\cdot\xi)^2 = |\xi|^2 I \quad\Longrightarrow\quad
(\alpha\cdot\xi)^{-1} = \frac{\alpha\cdot\xi}{|\xi|^2}.
\]
Thus
\[
\widehat{\varphi}(\xi)= (\alpha\cdot\xi)^{-1}\,\widehat{\mathcal{D}_{n}\varphi}(\xi)
=\frac{\alpha\cdot\xi}{|\xi|^2}\,\widehat{\mathcal{D}_{n}\varphi}(\xi).
\]
For each $k=1,\dots,n$,
\[
\widehat{\partial_k\varphi}(\xi)= i\xi_k\,\widehat{\varphi}(\xi)
= i\xi_k\,\frac{\alpha\cdot\xi}{|\xi|^2}\,\widehat{\mathcal{D}_{n}\varphi}(\xi)
=: M_k(\xi)\,\widehat{\mathcal{D}_{n}\varphi}(\xi).
\]
The multiplier $M_k(\xi)$ is a matrix-valued function which is smooth on $\R^n\setminus\{0\}$,
homogeneous of degree $0$, and satisfies standard Mihlin bounds 
\[
|\partial_\xi^\gamma M_k(\xi)|\le C_\gamma\,|\xi|^{-|\gamma|}\qquad(\xi\neq 0)
\]
{ for each multi-index $\gamma$ of order $|\gamma|\leq N$.}
Hence, by the (vector-valued) Mihlin multiplier theorem,
the operator $T_k$ defined by $\widehat{T_k f}=M_k\widehat{f}$ is bounded on $L^p(\R^n)$.
Applying this to $f=\mathcal{D}_{n}\varphi$ yields
\[
\|\partial_k\varphi\|_{L^p}\le C_{n,p}\,\|\mathcal{D}_{n}\varphi\|_{L^p}.
\]
Summing over $k$ gives \eqref{eq:globalCZ}.
\end{proof}

\begin{remark} \rm
This is the precise sense in which the (massless) Dirac operator is elliptic:
$\sigma_{\mathcal{D}_{n}}(\xi)$ is invertible for $\xi\neq 0$.
It is not a matter of positivity/coercivity.
\end{remark}
\medskip
\noindent
\textbf{Step 2: Local $W^{1,p}$ estimate for solutions of $\mathcal{D}_{n,m} \psi = F$.}

\begin{proposition}[Local $W^{1,p}$ estimate]\label{prop:localWp}
Let $1<p<\infty$ and let $\psi\in L^p_{\mathrm{loc}}(\R^n;\mathbb{C}^N)$ satisfy
\begin{equation}\label{eq:inhom}
\mathcal{D}_{n,m}\psi = F 
\end{equation}
in the sense of distribution, where $F\in L^p_{\mathrm{loc}}(\R^n;\mathbb{C}^N)$.
Then for every ball $B_{2R}(x_0)$ one has
\begin{equation}\label{eq:localWp}
\|\psi\|_{W^{1,p}(B_R(x_0))}
\le C_{n,p}\Big(\|F\|_{L^p(B_{2R}(x_0))} + (1+|m|+R^{-1})\|\psi\|_{L^p(B_{2R}(x_0))}\Big).
\end{equation}
\end{proposition}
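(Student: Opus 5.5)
The plan is to localize with a cutoff and apply Lemma \ref{lem:globalLp} to the truncated spinor, treating the mass term and the commutator of $\mathcal{D}_n$ with the cutoff as lower-order errors. The only delicate point is that $\psi$ is merely $L^p_{\mathrm{loc}}$ with a distributional equation. So we first mollify, prove the estimate for smooth spinors, and then pass to the limit.

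\textbf{Step A (cutoff).} Fix $\chi\in C_c^\infty(B_{2R}(x_0))$ with $0\le\chi\le1$, $\chi\equiv1$ on $B_R(x_0)$ and $|\nabla\chi|\lesssim R^{-1}$. For a smooth $\psi$, the Leibniz rule gives
\[
\mathcal{D}_n(\chi\psi)=\chi\,\mathcal{D}_n\psi - i\sum_{j}\alpha_j(\partial_j\chi)\psi = \chi F-\chi m\beta\psi - i(\alpha\cdot\nabla\chi)\psi.
\]
The matrices $\alpha_j$ and $\beta$ are constant with operator norm $1$ (they are Hermitian and square to the identity). Lemma \ref{lem:globalLp} applies to $\chi\psi\in C_c^\infty$, and hence
\[
\|\nabla\psi\|_{L^p(B_R)}\le\|\nabla(\chi\psi)\|_{L^p}\le C_{n,p}\big(\|F\|_{L^p(B_{2R})}+(|m|+R^{-1})\|\psi\|_{L^p(B_{2R})}\big).
\]
Adding the trivial bound $\|\psi\|_{L^p(B_R)}\le\|\psi\|_{L^p(B_{2R})}$ gives \eqref{eq:localWp} for smooth $\psi$; this is where the "$1$" in $(1+|m|+R^{-1})$ comes from.

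\textbf{Step B (approximation).} For general $\psi\in L^p_{\mathrm{loc}}$, let $\psi_\varepsilon=\psi*\rho_\varepsilon$ with a standard mollifier $\rho_\varepsilon$. Because $\mathcal{D}_{n,m}$ has constant coefficients, it commutes with convolution, so $\mathcal{D}_{n,m}\psi_\varepsilon=F_\varepsilon:=F*\rho_\varepsilon$ classically on $B_{2R-\varepsilon}(x_0)$.

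Apply Step A on the slightly smaller pair of balls $B_{R}\subset B_{2R-\delta}$, with a cutoff adapted to these radii (its gradient is still $\lesssim R^{-1}$ when $\delta\le R/2$). This yields bounds for $\psi_\varepsilon$ in $W^{1,p}(B_R)$ that are uniform in $\varepsilon$. As $\varepsilon\to0$ we have $\psi_\varepsilon\to\psi$ and $F_\varepsilon\to F$ in $L^p(B_{2R-\delta})$. Moreover $\nabla\psi_\varepsilon$ is bounded in $L^p(B_R)$, and $1<p<\infty$, so by reflexivity it has a weakly convergent subsequence. Its weak limit must be the distributional gradient $\nabla\psi$, so $\psi\in W^{1,p}(B_R)$. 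Weak lower semicontinuity of the norm gives the estimate with $B_{2R-\delta}$ on the right, which is dominated by $B_{2R}$.

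\textbf{Main obstacle.} The expected difficulty is only this justification step: making sure the weak formulation of Definition \ref{Def:WS} transfers to the mollified equation, and correctly identifying the weak limit of $\nabla\psi_\varepsilon$ as the distributional gradient. Both follow from standard duality arguments.

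The analytic content is entirely carried by Lemma \ref{lem:globalLp}. The constant $C_{n,p}$ is independent of $x_0$ and $R$ because the lemma's constant is scale- and translation-invariant, and every $R$-dependence enters only through $\|\nabla\chi\|_\infty\lesssim R^{-1}$.
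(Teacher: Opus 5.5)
Your proposal is correct and follows the paper's proof: you localize with a cutoff, write $\mathcal{D}_n(\chi\psi)=\chi F-m\chi\beta\psi-i(\alpha\cdot\nabla\chi)\psi$, apply Lemma \ref{lem:globalLp}, and add $\|\psi\|_{L^p(B_R)}$ to produce the $1$ in the constant. Your mollification Step B is a genuine improvement in rigor rather than a detour: the paper applies Lemma \ref{lem:globalLp}, which is stated only for $C_c^\infty$ spinors, directly to $\eta\psi$, a function not known a priori to be smooth or even in $W^{1,p}$, and your approximation-plus-weak-compactness argument is exactly what justifies that step.
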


\begin{proof}
Fix $x_0$ and $R>0$. Choose $\eta\in C_c^\infty(B_{2R}(x_0))$ such that
$\eta\equiv 1$ on $B_R(x_0)$ and $|\nabla\eta|\le C/R$.
Set $\varphi:=\eta\psi$ (so $\varphi\in L^p$ with compact support).
Compute derivatives (in distributional sense) using $\mathcal{D}_{n}(\eta\psi)=\eta\,\mathcal{D}_{n}\psi -i(\alpha\cdot\nabla\eta)\psi$:
\[
\mathcal{D}_{n}\varphi
= \mathcal{D}_{n}(\eta\psi)
= \eta(\mathcal{D}_{n}\psi) - i(\alpha\cdot\nabla\eta)\psi.
\]
Since $\mathcal{D}_{n,m}=\mathcal{D}_{n}+m\beta$, equation \eqref{eq:inhom} gives $\mathcal{D}_{n}\psi = F - m\beta\psi$,
hence
\[
\mathcal{D}_{n}\varphi = \eta F - m\eta\beta\psi - i(\alpha\cdot\nabla\eta)\psi.
\]
Now apply Lemma \ref{lem:globalLp} to $\varphi$:
\[
\|\nabla\varphi\|_{L^p(\R^n)} \le C_{n,p}\,\|\mathcal{D}_{n}\varphi\|_{L^p(\R^n)}.
\]
Because $\eta\equiv 1$ on $B_R(x_0)$, 
\[
\|\nabla\psi\|_{L^p(B_R(x_0))} \le \|\nabla(\eta\psi)\|_{L^p(\R^n)} + \|\psi\nabla\eta\|_{L^p(\R^n)}
\le \|\nabla\varphi\|_{L^p} + C R^{-1}\|\psi\|_{L^p(B_{2R}(x_0))}.
\]
Estimating $\|\mathcal{D}_{n}\varphi\|_{L^p}$ using the previous identity yields
\[
\|\mathcal{D}_{n}\varphi\|_{L^p}
\le \|F\|_{L^p(B_{2R}(x_0))}
+ |m|\|\psi\|_{L^p(B_{2R}(x_0))}
+ C R^{-1}\|\psi\|_{L^p(B_{2R}(x_0))}.
\]
Combining the last three displays gives \eqref{eq:localWp} (just add $\|\psi\|_{L^p(B_R)}$ to the left and use $\|\psi\|_{L^p(B_R)}\le \|\psi\|_{L^p(B_{2R})}$).
\end{proof}

\medskip
\noindent
\textbf{Step 3: Apply to $(\mathcal{D}_{n,m}+\mathbb{V})\psi=0$ and bootstrap.}

Fix a ball $B_{2R}(x_0)\subset\R^n$.
Rewrite \eqref{eq:main} as
\[
\mathcal{D}_{n,m}\psi = -\mathbb{V}\psi =: F.
\]
If $\psi\in L^p(B_{2R}(x_0))$, then $F\in L^p(B_{2R}(x_0))$ and
\[
\|F\|_{L^p(B_{2R}(x_0))}\le \|\mathbb{V}\|_{L^{\infty}(\R^n)}\,\|\psi\|_{L^p(B_{2R}(x_0))}.
\]
Plugging into Proposition \ref{prop:localWp} yields the key estimate:
\begin{equation}\label{eq:keyWp}
\|\psi\|_{W^{1,p}(B_R(x_0))}
\le C_{n,p}\,(1+|m|+\|V\|_\infty + R^{-1})\,\|\psi\|_{L^p(B_{2R}(x_0))}.
\end{equation}

\medskip\noindent
\textbf{Bootstrap in $p$.}
We start from the assumption $\psi\in L^2_{\mathrm{loc}}$, hence $\psi\in L^2(B_{2R}(x_0))$.
Apply \eqref{eq:keyWp} with $p=2$ to get $\psi\in W^{1,2}(B_R(x_0))$.

If $n>2$, Sobolev embedding gives
\[
W^{1,2}(B_R)\hookrightarrow L^{2^*}(B_R),\qquad 2^*=\frac{2n}{n-2},
\]
so $\psi\in L^{2^*}(B_R)$. Shrink radii (e.g.\ replace $R$ by $R/2$) and apply \eqref{eq:keyWp} with $p=2^*$ to get
$\psi\in W^{1,2^*}(B_{R/2})$, hence by Sobolev embedding $\psi\in L^{(2^*)^*}(B_{R/2})$.
Iterating (with radii $R, R/2, R/4,\dots$) yields
\[
\psi\in W^{1,p}(B_{R/2}(x_0))\quad\text{for every }p<\infty.
\]

If $n=2$, one uses that $W^{1,2}$ embeds into $L^q$ for every $q<\infty$ on bounded domains.
Thus from $\psi\in W^{1,2}(B_R)$ we get $\psi\in L^q(B_R)$ for any finite $q$,
and then \eqref{eq:keyWp} implies $\psi\in W^{1,q}(B_{R/2})$ for any $q<\infty$.

\medskip\noindent
\textbf{H\"older continuity.}
Choose any $p>n$. From the bootstrap we have $\psi\in W^{1,p}(B_{R/2}(x_0))$.
By Morrey's inequality,
\[
W^{1,p}(B_{R/2}(x_0)) \hookrightarrow C^{0,\alpha}(B_{R/2}(x_0)),
\qquad \alpha=1-\frac{n}{p}\in(0,1),
\]
and
\[
[\psi]_{C^{0,\alpha}(B_{R/2}(x_0))}
\le C(n,p)\,\|\psi\|_{W^{1,p}(B_{R/2}(x_0))}.
\]
Combining with \eqref{eq:keyWp} (with radii adjusted) gives the stated H\"older estimate.
Since $x_0$ and $R$ were arbitrary, $\psi\in C^{0,\alpha}_{\mathrm{loc}}(\R^n)$.
\end{proof}

\begin{rem} \rm
(1) The argument never differentiates $\mathbb{V}$, so $\mathbb{V} \in L^\infty$ is enough.
Squaring the equation would produce terms involving the partial derivatives of $ \mathbb{V}_{ij}$, which is why one avoids that route.


(2) By taking $p$ arbitrarily large, one gets $\psi\in C^{0,\alpha}_{\mathrm{loc}}$ for every $\alpha<1$,
but not necessarily Lipschitz without stronger assumptions.
\end{rem}

\section{Main Results}\label{Sec3}



\subsection{Quantitative estimates in general situation}

In this sub-section, we prove our main result Theorem \ref{Thm:main}. Our proof is based on the Carleman inequality for Dirac operator (Proposition \ref{Prop:CI}) following Bourgain--Kenig's approach \cite{BK05}. 

\begin{proof}[Proof of Theorem \ref{Thm:main}]
Let $\mathbb{U} \in H^1_{\operatorname{loc}}(\R^n,\mathbb{C}^{N})$ be a bounded solution to \eqref{Eq:Dirac} such that $|\mathbb{U}(0)|={ 1}$. Recall that we need to find a lower estimate of   $$M_R[\mathbb{U}]:=\inf_{|x|=R} \left[\sup_{y \in B_1(x)} |\mathbb{U}(y)| \right] $$ 
for $R\gg 1$. 
Fix $x_0 \in \R^n$ arbitrary with $|x_0|=R$ for $R\gg 1$. Interchanging $0$ and $x_0$ and rescaling, we define 
$$\widetilde{\mathbb{U}}(x)=\mathbb{U}\Big(AR\big(x+\frac{x_0}{AR}\big)\Big),$$
where $A>0$ is large dimensional constant that will be chosen later.
Note that $|\widetilde{\mathbb{U}}(\tilde{x}_0)|={ 1}$ for $\tilde{x}_0=-\frac{x_0}{AR}$, $|\tilde{x}_0|=\frac{1}{A}$, and set 
$$M_R[x_0]:= \sup_{B_{1}(x_0)} |{\mathbb{U}}| =\sup_{B_{r_0}} |\widetilde{\mathbb{U}}| \,,$$
where $r_0=\frac{1}{AR}$. 
We are going to apply the Carleman inequality \eqref{Eq:CI} with $u={\rho} \widetilde{\mathbb{U}}$, where the cut-off function $\rho: \R^n \rightarrow \R$ is taken to be a smooth function satisfying the following:
\begin{align}
\rho \equiv \begin{cases}
    0 \,, \ \ \mbox{on} \ \ |x| \leq \frac{r_0}{4} \ \ \mbox{and} \ \ |x|>4 \\
    1  \,, \ \ \mbox{on} \ \ \frac{1}{3}r_0 \leq |x| \leq 3 
\end{cases}
    \end{align}
and first order partial derivatives of $\rho$ are bounded by a constant if $|x|>3$ and by $R$ if $|x|< \frac{r_0}{3}$. 

We estimate the right hand side of the Carleman inequality \eqref{Eq:CI} as follows
\begin{align}
    I_R & := \int_{\R^n}\mathrm{e}^{\tau (\log |x|)^2} |\mathcal{D}_n (\rho \widetilde{\mathbb{U}})|^2 \dx  \nonumber \\
   & \lesssim  \int_{\R^n}\mathrm{e}^{\tau (\log |x|)^2} \left[\rho^2 |\mathcal{D}_n  \widetilde{\mathbb{U}}|^2 +  |\widetilde{\mathbb{U}}|^2 |\nabla {\rho} |^2\right] \dx  \nonumber \\
   & \lesssim  A^2 R^2 \int_{\R^n}\mathrm{e}^{\tau (\log |x|)^2}  \rho^2  |\widetilde{\mathbb{V}}\widetilde{\mathbb{U}}|^2 \dx  + \int_{\R^n}\mathrm{e}^{\tau (\log |x|)^2}  |\widetilde{\mathbb{U}}|^2 |\nabla {\rho} |^2 \dx \nonumber \\
    & \lesssim A^2 R^{2} \int_{\R^n}\mathrm{e}^{\tau (\log |x|)^2}    |\rho \widetilde{\mathbb{U}}|^2 \dx  +  I_{R,1} \nonumber
    \\
    & \lesssim A^2 R^{2} \int_{\R^n} \frac{\mathrm{e}^{\tau (\log |x|)^2} }{|x|^2}    |\rho \widetilde{\mathbb{U}}|^2 \dx  +  I_{R,1} \label{Est1} \,,
\end{align}
where  $\widetilde{\mathbb{V}}(x):= \mathbb{V}(AR(x+\frac{x_0}{AR}))$ {and the implicit constant in the last estimate depends on $\|\mathbb{V}\|_{L^{\infty}}$}. For the last estimate we use the support of $\rho$.  Now, 
\begin{align*}
    I_{R,1} & := \int_{\R^n}\mathrm{e}^{\tau (\log |x|)^2}  |\widetilde{\mathbb{U}}|^2 |\nabla {\rho} |^2 \dx \\
    & = \int_{\frac{r_0}{4} \leq |x| \leq \frac{r_0}{3}}\mathrm{e}^{\tau (\log |x|)^2}  |\widetilde{\mathbb{U}}|^2 |\nabla {\rho} |^2 \dx + \int_{3 \leq |x| \leq 4}\mathrm{e}^{\tau (\log |x|)^2}  |\widetilde{\mathbb{U}}|^2 |\nabla {\rho} |^2 \dx\\
    & \lesssim  R^2 \int_{\frac{r_0}{4} \leq |x| \leq \frac{r_0}{3}}\mathrm{e}^{\tau (\log |x|)^2}  |\widetilde{\mathbb{U}}|^2  \dx + \int_{3 \leq |x| \leq 4} \mathrm{e}^{\tau (\log |x|)^2}  |\widetilde{\mathbb{U}}|^2  \dx \\
    & \lesssim R^2 M_R[x_0]^2 \ \mathrm{e}^{\tau (\log (4 AR))^2} \left(\frac{1}{A^nR^n} \right) + \mathrm{e}^{\tau (\log 4)^2} \,,
\end{align*}
where we use the fact that $\mathbb{U}$ is bounded in order to estimate the second integral on the right hand side of the above inequality.
Using the above estimate of $I_{R,1}$ in \eqref{Est1}, we obtain 
\begin{align}
  I_R \lesssim  A^2 R^{2} \int_{\R^n} \frac{\mathrm{e}^{\tau (\log |x|)^2}}{|x|^2} \   |\rho \widetilde{\mathbb{U}}|^2 \dx +  A^{-n} R^{2-n} M_R[x_0]^2 \ \mathrm{e}^{\tau (\log (4 AR))^2} + \mathrm{e}^{\tau (\log 4)^2} \,.  
\end{align}

Observe that by choosing $\tau \sim A^2 R^{2}  $, we can absorb the first integral in the right hand side of the above equation with  the left hand side of the Carleman inequality, which is
\begin{align*}
    I_L & := \tau \int_{\R^n}  \frac{\mathrm{e}^{\tau (\log |x|)^2}}{|x|^2}   |\rho \widetilde{\mathbb{U}}|^2 \dx \,.
    \end{align*}
Consequently, we have
\begin{align} \label{Est2}
    I_L \lesssim A^{-n} R^{2-n} M_R[x_0]^2 \ \mathrm{e}^{\tau (\log (4 AR))^2} + \mathrm{e}^{\tau (\log 4)^2} \,,
\end{align}
provided $\tau \sim A^2 R^{2} $.
Next we estimate $I_L$ from below as follows:
\begin{align} \label{EqIL}
    I_L 
    & \geq \tau \int_{|x-\tilde{x}_0| < \frac{r_0}{2\gamma}}  \frac{\mathrm{e}^{\tau (\log |x|)^2}}{|x|^2}   |\rho \widetilde{\mathbb{U}}|^2 \dx \,,
    \end{align}
where $\gamma>0$ is large enough such that
\begin{align}
    |\widetilde{\mathbb{U}}(x) - \widetilde{\mathbb{U}}(\tilde{x}_0)| \leq \frac{\gamma^{\alpha}}{2^{1-\alpha}} A^{\alpha} R^{\alpha} |x-\tilde{x}_0|^{\alpha} < \frac{1}{2} \ \ \mbox{if} \ \ |x-\tilde{x}_0|< \frac{r_0}{2\gamma} \,.
\end{align}
Above we use the fact that $\mathbb{U}$ is locally $\alpha$-Hölder continuous for some $\alpha \in (0,1)$ {(Theorem \ref{thm:holder})}. Further, observe that we use the $\alpha$-Hölder continuity of $\mathbb{U}$ at the origin.
If $|x-\tilde{x}_0|< \frac{r_0}{2\gamma}$, then
$$ 1=|\widetilde{\mathbb{U}}(\tilde{x}_0)|\leq |\widetilde{\mathbb{U}}(\tilde{x}_0)-\widetilde{\mathbb{U}}(x)| + |\widetilde{\mathbb{U}}(x)| \leq \frac{1}{2} + |\widetilde{\mathbb{U}}(x)| \,. $$
Hence, $|\widetilde{\mathbb{U}}(x)| \geq \frac{1}{2}$ if  $|x-\tilde{x}_0|< \frac{r_0}{2\gamma}$. For large $R$, if  $|x-\tilde{x}_0|< \frac{r_0}{2\gamma}$ then $\frac{r_0}{3} \leq |x| \leq 3$. Using this in \eqref{EqIL}, we obtain
\begin{align*}
    I_L 
    & \gtrsim \tau \int_{|x-\tilde{x}_0| < \frac{r_0}{2\gamma}}  \frac{\mathrm{e}^{\tau (\log |x|)^2}}{|x|^2}   \dx \\
    & \gtrsim \tau A^2  \mathrm{e}^{\tau (\log (\frac{A}{2}))^2}   \left( \frac{1}{\gamma^nA^nR^n}\right)     \end{align*}
 for $A, R \gg1$. Observe that for  $A, R \gg1$, $|x-\tilde{x}_0|< \frac{r_0}{2\gamma}$ implies $\frac{1}{2A} \leq |x| \leq \frac{2}{A} <1$ and we use this fact in the last estimate above.  
Using this lower bound of $I_L$ in \eqref{Est2}, we obtain
\begin{align} \label{Est3}
  A^{2} R^{2-n}  \mathrm{e}^{\tau  (\log(\frac{A}{2}))^2}   \lesssim  
    R^{2-n} M_R(x_0)^2 \ \mathrm{e}^{\tau (\log (4 AR))^2} + A^n \mathrm{e}^{\tau (\log 4)^2} \,,  
\end{align}
where we use the fact that $\tau \sim A^2 R^{2}$. We can choose a large enough $A\gg1$ independent of $R$, $x_0$ and $\tau \sim A^2 R^{2} $ in order to absorb the last term in \eqref{Est3} to the left hand side of \eqref{Est3}.  Since the resulting inequality holds for any $x_0$ and $R\gg1$ with $|x_0|=R$, it follows that
$$M_R[\mathbb{U}] \gtrsim  \mathrm{e}^{-\kappa R^{2} (\log R)^{2}}$$
for some $\kappa >0$.
\end{proof}


\subsection{Improved quantitative estimates in special situations} 
In this subsection, we are interested to investigate certain class of $\mathbb{V}, \mathbb{U}$ for which we have an improved vanishing order estimate of $M_R[\mathbb{U}]$ than \eqref{Est:Thm1}.    We will stick to dimension $n=2$. Of course, one can assume sufficient smoothness on $\mathbb{V}$ and transform the Dirac equation \eqref{Eq:Dirac} to a Schr\"odinger equation with complex drift. Then it is possible to deduce  improved estimates of $M_R[\mathbb{U}]$ from \cite{DKW22} under additional condition (other than smoothness) on $\mathbb{V}$. In the same way, we can get improved  estimates for real-valued $\mathbb{U}$ using \cite{LMNN20}. 
However, our goal is to avoid additional smoothness assumption on $\mathbb{V}$. Instead, we investigate whether certain structural conditions of $\mathbb{V}$ provide us a better estimate of $M_R[\mathbb{U}]$. 

\subsubsection{{Two particular cases of potentials}}

Towards this, first note that the Dirac equation \eqref{Eq:Dirac} in two dimension reduces to a system of ODE as follows 
\begin{align} \label{Eq:sys}
 i \bar{\partial} \mathbb{U}_1 & = V_{21} \mathbb{U}_1  +  V_{22} \mathbb{U}_2 \nonumber \\
    i \partial \mathbb{U}_2 &= V_{11} \mathbb{U}_1  +  V_{12} \mathbb{U}_2 \,,
\end{align}
where $\mathbb{U}=(\mathbb{U}_1,\mathbb{U}_2)$ and $\partial:= \partial_x-i\partial_y$ and $\bar{\partial}:= \partial_x+i\partial_y$. We consider two {particular} cases.

{\bf Case-1.} Let $V_{11}=0=V_{22}$. Let  $\mathbb{U} \in H^1_{\operatorname{loc}}(\R^2,\mathbb{C}^{2})$ be a bounded solution of \eqref{Eq:Dirac} with $|\mathbb{U}(0)|= {1}$. Without loss of generality, let us assume $\mathbb{U}_1(0)=1$. This is possible, as  we can assume without loss of generality that $\mathbb{U}_1(0) \neq 0$, and in this case $\widetilde{\mathbb{U}}:=\frac{\mathbb{U}}{\mathbb{U}_1(0)}$ is also a solution of \eqref{Eq:Dirac}.   Then a direct application of Proposition \ref{Prop:KSW} on $\mathbb{U}_1$ gives us
\begin{align*}
    M_R[\mathbb{U}] &= \inf_{|x|=R} \left[\sup_{B_1(x)} |\mathbb{U}| \right] \\
    & \gtrsim \max \left\{ \inf_{|x|=R} \left[\sup_{B_1(x)} |\mathbb{U}_1| \right] , \inf_{|x|=R} \left[\sup_{B_1(x)} |\mathbb{U}_2| \right] \right\} \gtrsim \mathrm{e}^{-\kappa R \log R} \ \ \mbox{with} \ \ \kappa= \kappa(\|\mathbb{V} \|_{L^{\infty}}) >0 \,,
\end{align*}
for $R\gg1$.

{\bf Case-2.} Let $-\bar{V}_{12} = V_{21}$ and $-\bar{V}_{11}=V_{22}$. Taking the complex conjugate of the second equation in \eqref{Eq:sys} and subtracting it from the first  equation gives
$$i \bar{\partial} (\mathbb{U}_1 + \bar{\mathbb{U}}_2) = V_{21} \mathbb{U}_1  +  V_{22} \mathbb{U}_2 - \bar{V}_{11} \bar{ \mathbb{U}}_1  -  \bar{V}_{12} \bar{\mathbb{U}}_2 \,.$$
Thus,
\begin{align*}
    i \bar{\partial} (\mathbb{U}_1 + \bar{\mathbb{U}}_2) &= V_{21} \left(\mathbb{U}_1+\bar{\mathbb{U}}_2\right)    + V_{22} \left(\bar{ \mathbb{U}}_1 +\mathbb{U}_2 \right)  \\
    &= V_{21} \left(\mathbb{U}_1+\bar{\mathbb{U}}_2\right)    + V_{22} \left(\overline{ \mathbb{U}_1 +\bar{\mathbb{U}}_2} \right) \\
    &= \left( V_{21}     + V_{22} \frac{\overline{ \mathbb{U}_1 +\bar{\mathbb{U}}_2}}{\mathbb{U}_1+\bar{\mathbb{U}}_2} \right) \left(\mathbb{U}_1+\bar{\mathbb{U}}_2\right) := W \left(\mathbb{U}_1+\bar{\mathbb{U}}_2\right) \,.
\end{align*}
Clearly, $W$ is bounded as $V_{ij}$ are so. Hence,  Proposition \ref{Prop:KSW} applied to $\mathbb{U}_1+\bar{\mathbb{U}}_2$ ensures that any bounded solution $\mathbb{U} \in H^1_{\operatorname{loc}}(\R^2,\mathbb{C}^{2})$ of \eqref{Eq:Dirac} with $|\mathbb{U}(0)|={ 1}$ will satisfy
$$M_R[\mathbb{U}] = \inf_{|x|=R} \left[\sup_{B_1(x)} |\mathbb{U}| \right] \gtrsim  \inf_{|x|=R} \left[\sup_{B_1(x)} |\mathbb{U}_1+\mathbb{\bar U}_2| \right] \gtrsim \mathrm{e}^{-\kappa R \log R} \ \ \mbox{with} \ \ \kappa= \kappa(\|\mathbb{V} \|_{L^{\infty}}) >0 \,,$$
for $R\gg1$.

The above discussion leads to the following result.
\begin{theorem} \label{Thm:pot}
 Let $\mathbb{U} \in H^1_{\operatorname{loc}}(\R^2,\mathbb{C}^{2})$ be a bounded solution to \eqref{Eq:Dirac} such that $|\mathbb{U}(0)|={ 1}$ with either $V_{11}=0=V_{22}$, or $\bar{V}_{12} = -V_{21}$ and $V_{22}=-\bar{V}_{11}$. Then there exists $\kappa= \kappa(\|\mathbb{V} \|_{L^{\infty}})>0$ such that
 $$M_R[\mathbb{U}] \gtrsim  \mathrm{e}^{-\kappa R (\log R)}$$
 for $R\gg1$.   
\end{theorem}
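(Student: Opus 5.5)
The plan is to reduce the two-dimensional Dirac system \eqref{Eq:sys} to a single scalar $\bar{\partial}$-equation $\bar\partial w = W w$ with $W\in L^\infty(\R^2;\mathbb{C})$, $\|W\|_{L^\infty}\lesssim\|\mathbb{V}\|_{L^\infty}$, for a scalar function $w$ built from the components of $\mathbb{U}$. We then invoke Proposition \ref{Prop:KSW}. Its growth hypothesis $|w(z)|\le \mathrm{e}^{C|z|}$ is trivially satisfied because $\mathbb{U}$ is bounded. Pointwise evaluation at the origin is meaningful since $\mathbb{U}$ is continuous by Theorem \ref{thm:holder}. Finally, $|w|\lesssim|\mathbb{U}|$ pointwise, so the lower bound for $M_R[w]$ transfers to $M_R[\mathbb{U}]$.

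\emph{Case $V_{11}=0=V_{22}$.} The first equation of \eqref{Eq:sys} reads $i\bar\partial\mathbb{U}_1=V_{21}\mathbb{U}_1$, so $w=\mathbb{U}_1$ solves $\bar\partial w=-iV_{21}w$. Symmetrically, $\partial\mathbb{U}_2=-iV_{12}\mathbb{U}_2$, and after conjugation $\bar\partial\,\bar{\mathbb{U}}_2=\overline{(-iV_{12})}\,\bar{\mathbb{U}}_2$. Since $|\mathbb{U}(0)|=1$, one of the components is nonzero at $0$. We pick that one and normalize by dividing by its value at the origin. Because $|\mathbb{U}(0)|=1$, this value lies between $1/\sqrt2$ and $1$ in modulus for at least one component, so the constants stay uniform. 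Proposition \ref{Prop:KSW} then gives the bound for that component, hence for $\mathbb{U}$.

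\emph{Case $\bar V_{12}=-V_{21}$, $V_{22}=-\bar V_{11}$.} We conjugate the second equation of \eqref{Eq:sys}, which gives $-i\bar\partial\bar{\mathbb{U}}_2=\bar V_{11}\bar{\mathbb{U}}_1+\bar V_{12}\bar{\mathbb{U}}_2$, and combine it with the first. Using the symmetry assumptions, the result is
$i\bar\partial(\mathbb{U}_1+\bar{\mathbb{U}}_2)=V_{21}(\mathbb{U}_1+\bar{\mathbb{U}}_2)+V_{22}\,\overline{(\mathbb{U}_1+\bar{\mathbb{U}}_2)}$.
Setting $w=\mathbb{U}_1+\bar{\mathbb{U}}_2$ and $W=-i\big(V_{21}+V_{22}\,\bar w/w\big)$ on $\{w\neq0\}$, with $W=-iV_{21}$ on $\{w=0\}$, we obtain a bounded measurable $W$ with $|W|\le 2\|\mathbb{V}\|_\infty$ and $\bar\partial w=Ww$ weakly, since both sides vanish a.e.\ where $w=0$.

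The main obstacle is normalization at the origin in the second case. Here $w(0)$ could vanish even though $|\mathbb{U}(0)|=1$. To handle this, we use the freedom that $\mathrm{e}^{i\theta}\mathbb{U}$ satisfies the same hypotheses only if the structure is preserved. Instead, we use the analogous combination $\mathbb{U}_1-\bar{\mathbb{U}}_2$: subtracting rather than adding gives $i\bar\partial(\mathbb{U}_1-\bar{\mathbb{U}}_2)=V_{21}(\mathbb{U}_1-\bar{\mathbb{U}}_2)-V_{22}\overline{(\mathbb{U}_1-\bar{\mathbb{U}}_2)}$. Since $|\mathbb{U}_1+\bar{\mathbb{U}}_2|^2+|\mathbb{U}_1-\bar{\mathbb{U}}_2|^2=2|\mathbb{U}|^2$, at least one of the two combinations has modulus at least $1$ at $0$. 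We normalize that one (its value at $0$ has modulus at most $2$) and apply Proposition \ref{Prop:KSW}. This yields $M_R[\mathbb{U}]\gtrsim \sup\text{-bounds of }|w|\gtrsim\mathrm{e}^{-\kappa R\log R}$ with $\kappa=\kappa(\|\mathbb{V}\|_{L^\infty})$, uniformly since $\|\mathbb{U}\|_\infty$ enters only through the growth constant.
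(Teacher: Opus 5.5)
Your proposal is correct and follows the paper's route: you reduce the system to a scalar $\bar\partial$-equation with bounded coefficient ($\mathbb{U}_1$ or $\bar{\mathbb{U}}_2$ in the first case, $\mathbb{U}_1+\bar{\mathbb{U}}_2$ in the second) and then invoke Proposition \ref{Prop:KSW}. Your extra step with $\mathbb{U}_1-\bar{\mathbb{U}}_2$ and the identity $|\mathbb{U}_1+\bar{\mathbb{U}}_2|^2+|\mathbb{U}_1-\bar{\mathbb{U}}_2|^2=2|\mathbb{U}|^2$ repairs a point the paper skips: the paper applies Proposition \ref{Prop:KSW} to $\mathbb{U}_1+\bar{\mathbb{U}}_2$ without checking its value at the origin, and this combination vanishes identically for every solution of the form $(\mathbb{U}_1,-\bar{\mathbb{U}}_1)$, for instance $\mathbb{V}=0$, $\mathbb{U}\equiv(1,-1)/\sqrt{2}$.
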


\subsubsection{{Real-valued solutions.}}
Next we consider the real-valued case, i.e., the solution $\mathbb{U}= (\mathbb{U}_1,\mathbb{U}_2)$ is such that $\mathbb{U}_i(\cdot)\in \R$, $i=1,2$. {This situation arises for instance in the context of the \textit{Majorana equations}, which describe relativistic spin-$1/2$ fermions that are their own  \cite{Me37, We96, Sc14}. They have the same formal structure as the Dirac equation but impose the so called Majorana condition, identifying the field with its charge conjugate. This condition implies that, in a suitable choice  of representations, the spinor field can be taken real-valued, although in other representations its components are generally complex.} Recall that the Dirac equation \eqref{Eq:Dirac} reduces to the system of ODEs given by \eqref{Eq:sys}.
Define $F= \mathbb{U}_1 + i \mathbb{U}_2$ and $G=\mathbb{U}_1 - i \mathbb{U}_2$. Then $G=\bar{F}$, as $\mathbb{U}$ is real-valued. Observe that $\bar \partial F = \bar \partial \mathbb{U}_1 + i \bar \partial \mathbb{U}_2 = \bar \partial \mathbb{U}_1 + i \,\overline{\partial \mathbb{U}_2}$. Thus
\begin{align*}
    \bar \partial F & =  -iV_{21} \mathbb{U}_1  -i  V_{22} \mathbb{U}_2 -  ( \bar{V}_{11} \mathbb{U}_1  +   \bar{V}_{12} \mathbb{U}_2) = -(\bar{V}_{11} + iV_{21}) \mathbb{U}_1 - (\bar{V}_{12} + i  V_{22}) \mathbb{U}_2 \\
    & = -(\bar{V}_{11} + iV_{21}) \left(\frac{F+G}{2} \right) - (\bar{V}_{12} + i  V_{22}) \left(\frac{F-G}{2i} \right) \\
    & = -\left(\frac{\bar{V}_{11}+ V_{22} + i[V_{21}-\bar{V}_{12}]}{2} \right) F  - \left(\frac{\bar{V}_{11}- V_{22} + i[V_{21}+\bar{V}_{12}]}{2} \right) \bar F = W F \,,
\end{align*}
where $W=-\left(\frac{\bar{V}_{11}+ V_{22} + i[V_{21}-\bar{V}_{12}]}{2} \right) - \left(\frac{\bar{V}_{11}- V_{22} + i[V_{21}+\bar{V}_{12}]}{2} \right) \frac{\bar F}{F}$. Note that $W$ is bounded. Hence, it follows from \cite[Theorem 2.1]{KSW15} that any bounded real-valued solution $\mathbb{U} \in H^1_{\operatorname{loc}}(\R^2,\mathbb{C}^{2})$ of \eqref{Eq:Dirac} with $|\mathbb{U}(0)|={ 1}$ will satisfy
\begin{align*}
    M_R[\mathbb{U}] = \inf_{|x|=R} \left[\sup_{B_1(x)} |\mathbb{U}| \right] =  \inf_{|x|=R} \left[\sup_{B_1(x)} |F| \right]  \gtrsim 
 \mathrm{e}^{-\kappa R \log R} \ \ \mbox{with} \ \ \kappa= \kappa(\|\mathbb{V} \|_{L^{\infty}}) >0 \,,
\end{align*}

From the above discussion we obtain the following result.
\begin{theorem} \label{Thm:Real}
 Let $\mathbb{U} \in H^1_{\operatorname{loc}}(\R^2)$ be a bounded real-valued solution to \eqref{Eq:Dirac} such that $|\mathbb{U}(0)|= 1$. Then there exists $\kappa= \kappa(\|\mathbb{V} \|_{L^{\infty}})>0$ such that
 $$M_R \gtrsim  \mathrm{e}^{-\kappa R (\log R)}$$
 for $R\gg1$.   
\end{theorem}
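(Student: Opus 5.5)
The plan is to reduce the real-valued Dirac system to a single scalar $\bar\partial$-equation with a bounded coefficient, and then invoke Proposition \ref{Prop:KSW}, as in the discussion preceding the statement. Write $\mathbb{U}=(\mathbb{U}_1,\mathbb{U}_2)$ with $\mathbb{U}_i$ real, so that \eqref{Eq:Dirac} becomes the system \eqref{Eq:sys}. Set $F=\mathbb{U}_1+i\mathbb{U}_2$. Since $\mathbb{U}_2$ is real, $\bar\partial\mathbb{U}_2=\overline{\partial\mathbb{U}_2}$, and this lets us use the conjugate of the second equation of \eqref{Eq:sys}. Substituting $\mathbb{U}_1=(F+\bar F)/2$ and $\mathbb{U}_2=(F-\bar F)/(2i)$ then gives
$$\bar\partial F=a F+b\bar F,$$
where $a,b$ are the explicit bounded linear combinations of $V_{ij}$ and $\bar V_{ij}$ computed above. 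Define $W:=a+b\,\bar F/F$ on $\{F\neq0\}$ and $W:=a$ on $\{F=0\}$. Then $|W|\le |a|+|b|\lesssim \|\mathbb{V}\|_{L^\infty}$ and $\bar\partial F=WF$ holds a.e. Since $F\in H^1_{\rm loc}$, this also holds in the weak sense: on $\{F=0\}$ both sides vanish, because $\nabla F=0$ a.e. there.

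Next I verify the hypotheses of Proposition \ref{Prop:KSW}. The Hölder continuity from Theorem \ref{thm:holder} makes pointwise values meaningful. Because $\mathbb{U}$ is real, $|F|=|\mathbb{U}|$, so $|F(0)|=1$. Boundedness of $\mathbb{U}$ gives $|F(z)|\le \|\mathbb{U}\|_\infty\le e^{C|z|}$ for a suitable $C$. If the proposition needs a normalization, multiply $F$ by a unimodular constant; the equation is unchanged since $W$ is defined through the ratio $\bar F/F$, which we simply recompute for the new function. The proposition yields $\kappa=\kappa(C,\|W\|_\infty)$ with $\inf_{|x|=R}\sup_{B_1(x)}|F|\gtrsim e^{-\kappa R\log R}$. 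Since $|F|=|\mathbb{U}|$ pointwise, the left side is exactly $M_R[\mathbb{U}]$, which is the claim.

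The main obstacle is the legitimacy of the weak formulation with the nonlinear coefficient $\bar F/F$. Proposition \ref{Prop:KSW} treats $V$ only as an $L^\infty$ function, so all we need is $W$ measurable and bounded, which holds with the definition above. The only delicate point is the zero set of $F$. There I use $\nabla F=0$ a.e. on $\{F=0\}$, valid for Sobolev functions. The dependence of $\kappa$ on $\|\mathbb{U}\|_\infty$ through $C$ is harmless and is absorbed into the implicit constants, exactly as in the statement of Proposition \ref{Prop:KSW}.
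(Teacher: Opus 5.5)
Your proposal is correct and is essentially the paper's own argument. It uses the same function $F=\mathbb{U}_1+i\mathbb{U}_2$ and the same identity $\bar\partial F=aF+b\bar F$, rewritten as $\bar\partial F=WF$ with the bounded coefficient $W=a+b\bar F/F$. It then appeals directly to Proposition \ref{Prop:KSW} via $|F|=|\mathbb{U}|$, and your handling of the zero set of $F$ and of pointwise values (through Theorem \ref{thm:holder}) fills in details the paper leaves implicit.

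One small caution, which applies equally to the paper:
\begin{itemize}
\item The chain $|F(z)|\le\|\mathbb{U}\|_\infty\le \mathrm{e}^{C|z|}$ fails for small $|z|$ unless $\|\mathbb{U}\|_\infty=1$.
\item A dependence of $\kappa$ on $C$ sits in the exponent, not in the implicit constant of $\gtrsim$, so it cannot simply be absorbed as you claim.
\end{itemize}
So Proposition \ref{Prop:KSW} has to be used in the form where the growth bound is only needed at large scales. For bounded $F$ this affects only the threshold $R\gg1$ and leaves $\kappa=\kappa(\|\mathbb{V}\|_{L^\infty})$.
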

\begin{remark} \rm
Note that $\mathbb{V}$ can be complex-valued in the above theorem and we do not assume any symmetry assumption on $\mathbb{V}$.    
\end{remark}

\subsection{Qualitative Landis-type results with decaying potential}
In this subsection, we consider that the bounded potential $\mathbb{V}$ has some decay, say, $|\mathbb{V}(x)| \lesssim |x|^{-\la}$ near infinity for some $\lambda >0$. We are interested in deriving a qualitative unique continuation results capturing this decay behavior of the potential. In \cite{Ca22}, Cassano investigated the case $\lambda <1$.  He used the standard idea of using a suitable Carleman inequality to prove that a solution $\mathbb{U}$ of \eqref{Eq:Dirac} is compactly supported if  $\mathrm{e}^{k|x|^{2-2\lambda}} \mathbb{U} \in L^2(\R^n \setminus K ; \mathbb{C}^N)$ for some compact set $K$. Precisely, the following Carleman inequality was used:
\begin{align} \label{Eq:CI2}
    \tau  \int_{\R^n} \left[ \left(\partial_r^2 + \frac{1}{r} \partial_r\right) b(r) \right]    \mathrm{e}^{\tau b(|x|)} |u(x)|^2 \dx \leq \int_{\R^n}\mathrm{e}^{\tau b(|x|)} |\mathcal{D}_n u|^2 \dx
\end{align}
 for all $u \in C^{\infty}_c(\R^n \setminus \{0\},\mathbb{C}^{N})$ with the Carleman weight $b(r)=r^{a}$, $a>0$. Because of this particular choice of Carleman weight, the crucial restriction $\lambda <1$ appears in \cite{Ca22}.   However, the above Carleman inequality holds for any smooth $b$ in $\R^n \setminus \{0\}$. We choose the Carleman weight
 $$b(r)=\log(1+|x|^a) \,, \ \  a>0 \,.$$
 Then one can see that $$\left(\partial_r^2 + \frac{1}{r} \partial_r\right) b(r)= \frac{a^2}{r^{2-a}(1+r^{a})^2} \,.$$
 We observe that with this choice of Carleman weight, we can repeat the arguments in \cite{Ca22} to treat the case $\lambda >1$. We omit the proof as it follows the same argument of the proof for \cite[Theorem 1.1]{Ca22}.
\begin{theorem} \label{Thm:qual1}
    Let $\mathbb{U} \in H^1_{\operatorname{loc}}(\R^n,\mathbb{C}^{N})$ be a solution to \eqref{Eq:Dirac} with bounded matrix-valued potential  $\mathbb{V}: \R^n \rightarrow  \mathbb{C}^{N \times N}$ which  decays as $|\mathbb{V}(x)| \lesssim |x|^{-\lambda}$ near infinity for some $\lambda >1$. Assume that there exists a compact set $K$ in $\R^n$ such that $\mathrm{e}^{\kappa (\log(1+|x|^{2\lambda -2}))}\mathbb{U} \in L^2(\R^n\setminus K; \mathbb{C}^N)$ for all $\kappa \in \mathbb{N}$. Then $\mathbb{U} \equiv 0$ in $\mathbb{\R}^n$. 
\end{theorem}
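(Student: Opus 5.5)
The plan is to follow Cassano's argument for \cite[Theorem 1.1]{Ca22}, replacing the power weight with $b(r)=\log(1+r^a)$ in the Carleman inequality \eqref{Eq:CI2}, with $a=2\lambda-2>0$. Since $\mathrm{e}^{\tau b}=(1+r^a)^\tau$, the inequality reads
\[
\tau a^2\int_{\R^n}\frac{(1+|x|^a)^{\tau}}{|x|^{2-a}(1+|x|^a)^2}|u|^2\dx\le\int_{\R^n}(1+|x|^a)^\tau|\mathcal{D}_n u|^2\dx .
\]
For $|x|\ge r_1\gg1$ the left weight is comparable to $\tau a^2 |x|^{-2-a}(1+|x|^a)^\tau=\tau a^2|x|^{-2\lambda}(1+|x|^a)^\tau$. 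The right-hand side, after using the equation, carries $|\mathbb{V}|^2\lesssim |x|^{-2\lambda}$. The exponent $a$ is chosen exactly so that the potential term and the Carleman gain have the same power of $|x|$; this is the critical balance, and absorption is then achieved by taking $\tau$ large.

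\textbf{Step 1: the cutoff.} Take $u=\chi\eta_R\mathbb{U}$, where $\chi\in C^\infty$ vanishes for $|x|\le r_1$ and equals $1$ for $|x|\ge 2r_1$, with $r_1$ chosen so that $K\subset B_{r_1}$ and the decay bound on $\mathbb{V}$ holds outside $B_{r_1}$. The function $\eta_R$ is a smooth cutoff equal to $1$ on $B_R$, supported in $B_{2R}$, with $|\nabla\eta_R|\lesssim R^{-1}$. By density, and since $\mathbb{U}\in H^1_{\operatorname{loc}}$, the Carleman inequality extends to such compactly supported $H^1$ functions.

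\textbf{Step 2: expanding the right-hand side.} Using $\mathcal{D}_n u=-\chi\eta_R\mathbb{V}\mathbb{U}-i(\alpha\cdot\nabla(\chi\eta_R))\mathbb{U}$, the right-hand side is bounded by three terms:
\begin{itemize}
\item the potential term $2C\int |x|^{-2\lambda}(1+|x|^a)^\tau|u|^2$;
\item a gradient-of-$\chi$ term, supported in the fixed annulus $r_1\le|x|\le 2r_1$;
\item a gradient-of-$\eta_R$ term, bounded by $R^{-2}\int_{R<|x|<2R}(1+|x|^a)^\tau|\mathbb{U}|^2$.
\end{itemize}
Fixing $\tau\in\mathbb{N}$ with $\tau a^2\ge 4C$ absorbs the potential term into the left-hand side. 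The hypothesis with $\kappa=\tau$ says $(1+|x|^a)^{\tau}|\mathbb{U}|^2$ is integrable outside $K$, so the $\eta_R$ term tends to $0$ as $R\to\infty$. By monotone convergence we obtain
\[
\tau\int_{|x|\ge 2r_1}|x|^{-2\lambda}(1+|x|^a)^\tau|\mathbb{U}|^2\dx\lesssim (1+(2r_1)^a)^\tau\int_{r_1\le|x|\le2r_1}|\mathbb{U}|^2\dx .
\]

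\textbf{Step 3: the main obstacle.} This is the step I expect to be hardest: deducing $\mathbb{U}=0$ outside a compact set from Step 2. The weight is now only polynomial, so the weight comparison alone gives no contradiction. The plan is the standard comparison between weights at two different radii $\rho>2r_1$. Restrict the left integral to $|x|\ge\rho$, where the weight is at least $\tau\rho^{-2\lambda}(1+\rho^a)^\tau$. This gives
\[
\int_{|x|\ge\rho}|\mathbb{U}|^2\lesssim \frac{\rho^{2\lambda}}{\tau}\Big(\frac{1+(2r_1)^a}{1+\rho^a}\Big)^\tau\|\mathbb{U}\|^2_{L^2(r_1<|x|<2r_1)} .
\]
The bracket is less than $1$, so letting $\tau\to\infty$ through $\mathbb{N}$ (legitimate because the hypothesis holds for every $\kappa$) gives $\mathbb{U}\equiv0$ on $|x|\ge\rho$, for every $\rho>2r_1$. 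Hence $\mathbb{U}$ is compactly supported.

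The delicate points here are two. First, the constants must be uniform in $\tau$: the absorption threshold is $\tau\ge 4C/a^2$, which is independent of $\tau$, and the cutoff constants depend only on $r_1$. Second, the $\eta_R$ term must vanish for each fixed $\tau$, which the hypothesis guarantees.

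\textbf{Step 4: conclusion.} Since $\mathbb{U}$ vanishes on an open set, the unique continuation property for Dirac operators with bounded potentials \cite{HP76} gives $\mathbb{U}\equiv0$ in $\R^n$.
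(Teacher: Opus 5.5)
Your proposal is correct and follows the same route the paper indicates: the Carleman inequality \eqref{Eq:CI2} with $b(r)=\log(1+r^a)$ and $a=2\lambda-2$, so that the Carleman gain $\tau a^2 r^{a-2}(1+r^a)^{-2}\sim \tau a^2|x|^{-2\lambda}$ absorbs the potential term. This is followed by Cassano's two-radii comparison with $\tau\to\infty$ to get compact support, and unique continuation \cite{HP76} to conclude. Two points are only cosmetic: the hypothesis with $\kappa=\tau$ actually gives integrability of $(1+|x|^a)^{2\tau}|\mathbb{U}|^2$, which is stronger than you need; and the monotonicity of $r\mapsto r^{-2\lambda}(1+r^a)^\tau$ used in Step 3 requires $\tau>4\lambda/a$. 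Neither affects the argument.
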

A natural question remains, i.e., what happens for $\lambda =1$? This corresponds to the critical potentials for the Dirac operators from the point of view of scaling. In this case, a qualitative Landis-type result in the exterior domain $\Om$ follows from a recent result  by {Cassano \cite{Ca25}} under the assumption that $\||x|V\|_{L^{\infty}(\Om)} < \frac{1}{2}$. More precisely, if  $|\mathbb{V}(x)| \leq \frac{1}{2|x|}$ in $\Om:= \{x \in \R^n: |x|> \rho\}$ for some $\rho>0$ and 
$$\lim_{R \rightarrow \infty} R^k \int_{|x|>R} |\mathbb{U}(x)|^2 \dx =0 \,, \ \ \forall k \in \mathbb{N} \,,$$
then $\mathbb{U}\equiv 0$ in $\Om$. Moreover, $\frac{1}{2}$ is the optimal threshold in the sense that for each $C\geq \frac{1}{2}$ there are potentials $\mathbb{V}$ and nontrivial solution $\mathbb{U}$ such that $\||x|V\|_{L^{\infty}(\Om)} < C$ and $\mathbb{U}$ satisfies the above decay behavior. We note that the author used the Kelvin transformation to prove this qualitative result which creates a difficulty to extend the same for entire $\R^n$.     Our aim is to investigate this case with a very specific kind of potentials $\mathbb{V}$ of the form $\mathbb{V}= V I_{N \times N}$, where $V:\R^n \rightarrow \mathbb{C}$ is a radial function and $|V(x)| \lesssim |x|^{-1}$ in $\R^n$. We emphasize that $V$ does not need to be bounded, i.e., for example the Coulomb potentials $V(x)= \alpha |x|^{-1}$ are included in this family for all $\alpha \in \R$. 

\begin{theorem} \label{Thm:qual2}
    Let $\mathbb{U} \in H^1_{\operatorname{loc}}(\R^n,\mathbb{C}^{N})$ be a solution to \eqref{Eq:Dirac} where the potential is of the form $\mathbb{V}= V I_{m \times m}$, where $V:\R^n \rightarrow \mathbb{C}$ is a radial function and $|V(x)| \lesssim |x|^{-1}$ in $\R^n$.  Assume that
    $$\lim_{R \rightarrow \infty} R^{k} \int_{|x| \geq  R}  |\mathbb{U}(x)|^2 \dx \rightarrow 0 $$
for all $k \in \N$. Then $\mathbb{U} \equiv 0$ in $\mathbb{\R}^n$. 
\end{theorem}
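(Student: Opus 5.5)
We will exploit the radial structure of $V$ and the fact that $\mathbb{V}=VI$ commutes with the matrices $\alpha_j$. In spherical coordinates the Dirac operator splits into a radial and an angular part:
$$\mathcal{D}_n=-i(\alpha\cdot\hat x)\Big(\partial_r+\frac{n-1}{2r}-\frac{1}{r}K\Big),$$
where $\hat x=x/|x|$ and $K$ is the spin-orbit (Dirac angular) operator acting on $L^2(\mathbb{S}^{n-1};\mathbb{C}^N)$. The operator $K$ is self-adjoint with discrete spectrum $\{\pm(\ell+\frac{n-1}{2}):\ell\ge0\}$, and it anticommutes with $\alpha\cdot\hat x$. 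The idea is to reduce everything to ODEs in $r$. Since $V$ is radial and scalar, the equation only couples the eigenspaces of $K$ in pairs via $\alpha\cdot\hat x$, which maps the $\kappa$-eigenspace to the $(-\kappa)$-eigenspace.

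\textbf{Step 1 (partial-wave decomposition).} Fix an eigenvalue $\kappa>0$ with eigenfunction $\Phi_\kappa$. Then $\Psi_\kappa:=(\alpha\cdot\hat x)\Phi_\kappa$ is an eigenfunction for $-\kappa$. Project $\mathbb{U}(r\omega)$ onto $\mathrm{span}\{\Phi_\kappa,\Psi_\kappa\}$ and write
$$\mathbb{U}=\sum f_\kappa(r)\Phi_\kappa+g_\kappa(r)\Psi_\kappa.$$
Multiplying the equation by $i(\alpha\cdot\hat x)$ gives $(\partial_r+\frac{n-1}{2r}-\frac{K}{r})\mathbb{U}=-i(\alpha\cdot\hat x)V\mathbb{U}$. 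Using $(\alpha\cdot\hat x)^2=I$, this becomes the $2\times2$ system
$$\Big(\partial_r+\tfrac{n-1}{2r}-\tfrac{\kappa}{r}\Big)f=-iVg,\qquad \Big(\partial_r+\tfrac{n-1}{2r}+\tfrac{\kappa}{r}\Big)g=-iVf.$$
Here $\mathbb{U}\in H^1_{\rm loc}$ guarantees that $f,g$ are locally absolutely continuous on $(0,\infty)$ and solve the system a.e. The decay hypothesis passes to each mode by Parseval on spheres:
$$R^k\int_R^\infty(|f|^2+|g|^2)r^{n-1}\,dr\to0 \quad\text{for all } k.$$

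\textbf{Step 2 (ODE uniqueness at infinity).} Substitute $f=r^{-(n-1)/2}F$ and $g=r^{-(n-1)/2}G$. Then $F'=\frac{\kappa}{r}F-iVG$ and $G'=-\frac{\kappa}{r}G-iVF$, with $|V|\le C/r$. This is a first-order linear system with coefficient matrix $A(r)/r$, where $A$ is bounded by $\kappa+C$. Grönwall's inequality applied to $\log(|F|^2+|G|^2)$ gives the two-sided bound
$$\frac{d}{dr}\log(|F|^2+|G|^2)\ge-\frac{2(\kappa+C)}{r}.$$
Hence $|F|^2+|G|^2\ge c\,r^{-2(\kappa+C)}$ for large $r$, unless $F=G=0$ at some point, in which case the pair vanishes identically by ODE uniqueness. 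Therefore any nontrivial mode has at most polynomial decay in $L^2(r^{n-1}dr)$ near infinity. Taking $k>2(\kappa+C)$ contradicts the superpolynomial decay hypothesis. So every pair $(f_\kappa,g_\kappa)$ vanishes for $r$ large. By uniqueness of the ODE on $(0,\infty)$, where the coefficients are locally bounded away from $0$, it vanishes for all $r>0$.

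\textbf{Step 3 (conclusion).} Every mode vanishes, so $\mathbb{U}=0$ a.e. on $\R^n\setminus\{0\}$, hence in $\R^n$.

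\textbf{Main obstacle.} The delicate points are (i) setting up the angular decomposition rigorously in general dimension $n$, namely the spectrum of $K$, the anticommutation identity, and that radial projection commutes with $V$; and (ii) justifying the componentwise ODE for merely $H^1_{\rm loc}$ data with $V$ possibly singular at the origin. Point (ii) is harmless because everything is done on $(0,\infty)$, where $V\in L^\infty_{\rm loc}$. For the first point, we would try to cite the standard partial-wave analysis of Dirac operators with spherically symmetric scalar potentials rather than re-derive it. Note that the Grönwall constant depends on $\kappa$, but this does not matter since the decay is assumed for all $k$.
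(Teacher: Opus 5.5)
Your proof is correct. It takes a genuinely different route from the paper in the key step, though it shares the paper's first step.

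\textbf{Shared step.} The paper also writes $\mathcal{D}_n=\mathrm{e}^{-y}A\big[\partial_y+(\frac{n-1}{2}-B)\big]$, with $y=\log r$, $A=-i\sum_j\omega_j\alpha_j$ and $B$ your $K$. It reduces to the same coupled pairs of radial ODEs linking the $\lambda$ and $-\lambda$ eigenspaces.

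\textbf{The paper's uniqueness step.} It applies the one-dimensional Carleman inequality $\nu^2\int \mathrm{e}^{2\nu y}\varphi^2\,dy\le\int \mathrm{e}^{2\nu y}(\varphi')^2\,dy$ to $\eta_R\widetilde f_{-\lambda,l}$, where $\eta_R$ is a cutoff on $(-2\log R,2\log R)$. It absorbs the potential term using $|\mathrm{e}^{y}V(\mathrm{e}^{y})|\lesssim 1$, together with an $R$-dependent ``without loss of generality'' comparison between the two partner modes. The cutoff error at infinity, of size $R^{4\nu}$, is killed by the superpolynomial decay. Finally it integrates the decoupled first-order ODE for the partner explicitly.

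\textbf{Your uniqueness step.} You use the pointwise inequality $\frac{d}{dr}(|F|^2+|G|^2)\ge-\frac{2(\kappa+C)}{r}(|F|^2+|G|^2)$. Equivalently, $r^{2(\kappa+C)}(|F|^2+|G|^2)$ is nondecreasing, which is the ODE-level counterpart of the same polynomial weight $\mathrm{e}^{2\nu y}=r^{2\nu}$.

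\textbf{What your route buys.}
\begin{itemize}
\item It treats both partner modes symmetrically, so the case distinction between them disappears.
\item It needs no cutoff, hence no control of $\mathbb{U}$ near the origin. There $V$ may be as singular as $|x|^{-1}$, and the paper's cutoff term on $(-2\log R,-\log R)$ needs some care.
\item It is quantitative: a nontrivial $\kappa$-mode satisfies $\int_{|x|\ge R}|\mathbb{U}|^2\,dx\gtrsim R^{1-2(\kappa+C)}$ for large $R$.
\end{itemize}
The Carleman route matches the methodology used elsewhere in the paper. In this radial, scalar-potential setting it gives nothing beyond what your argument gives.

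\textbf{A small simplification.} The monotonicity holds from any starting point $r_0>0$. It therefore already forces $(F,G)(r_0)=0$ for every $r_0>0$. So you need neither the logarithm (with its separate treatment of zeros) nor the final ODE-uniqueness step.
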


The following polar decomposition of the Dirac operator will play an important role in the proof of above theorem. For $n \geq 2$, using the polar coordinates:
$$r=|x|, \qquad \om= \frac{x}{|x|}, \qquad y=\log r\,, $$
we can express the Dirac operator in terms of the operators acting on the sphere $\mathbb{S}^{n-1}$ as follows:
\begin{align} \label{Eq:Fourier}
    \mathcal{D}_n = \mathrm{e}^{-y} A \Big[ \partial_y + \big(\frac{n-1}{2}-B\big)\Big] \,,
\end{align}
where $A(\om):= -i\sum_{j=1}^n \om_j \alpha_j $ and $B(\om)= \sum_{j<k} \alpha_j \alpha_k \left[\om_k \frac{\partial}{\partial\om_j} - \om_j \frac{\partial}{\partial\om_k} \right] + \frac{n-1}{2}I_{N \times N}$. We also have $AB=-BA$. It is known that the set of all eigenvalues of $B$ is $\sigma(B):= -\left(\mathbb{N}_0 + \frac{n-1}{2}\right) \cup \left(\mathbb{N}_0 + \frac{n-1}{2}\right)$, where $\mathbb{N}_0:=\{0,1,2, \ldots\}$. Let $E_{\lambda}$ be the $d_{\lambda}$-dimensional eigenspace of $B$ corresponding to the eigenvalue $\lambda$, which is spanned by the linearly independent eigenfunction $\{v_{\lambda,l}\}_{l=1}^{d_{\lambda}}$. The set of all eigenfunctions $\{v_{\lambda,l}\}$ for $\lambda \in \sigma(B)$ and $l=1, \ldots, d_{\lambda}$ forms an orthonormal basis of $L^2(\mathbb{S}^{n -1})^N$. Since $A^*A=I_{N \times N}$ and $AB=-BA$, it follows that 
$$A v_{\lambda,l} = v_{-\lambda,l}\,, \ \ l=1,\ldots,d_{\lambda}$$
for each $\lambda$. For details on the above decomposition \eqref{Eq:Fourier} of the Dirac operator and its properties, we refer to \cite[Section 4]{DO99} 

Now, a solution $\mathbb{U} \in L^2(\mathbb{R}^{n};\mathbb{C}^N)$ of \eqref{Eq:Dirac} can be written in the polar coordinate as follows:
\begin{align} \label{Fourier}
    \mathbb{U}(\mathrm{e}^{y},\om) = \sum_{\lambda \in \sigma(B)} \sum_{l=1}^{d_{\lambda}} f_{\lambda,l}(\mathrm{e}^{y}) v_{\lambda,l} (\omega) \,,
\end{align}
where $f_{\lambda,l}$ is determined by
$$f_{\lambda,l}(\mathrm{e}^y)= \langle \mathbb{U} , v_{\lambda,l}\rangle_{L^2(\mathbb{S}^{n-1})^N} = \int_{\mathbb{S}^{n-1}} \langle \mathbb{U}(\mathrm{e}^y,\om), v_{\lambda,l}(\om) \rangle_{\mathbb{C}^N} \d\om  \,.$$
Then [cf. \cite[(4.2)]{DO99}]
\begin{align*}
    \mathcal{D}_n \mathbb{U} (\mathrm{e}^y,\om) &= \mathrm{e}^{-y} A \Big[ \partial_y + \big(\frac{n-1}{2}-B\big)\Big] \mathbb{U} \\
    & = \mathrm{e}^{-y} \sum_{\lambda \in \sigma(B)} \sum_{l=1}^{d_{\lambda}}  \Big[ \partial_y f_{-\lambda,l}(\mathrm{e}^y) + \big(\frac{n-1}{2}-\lambda\big) f_{-\lambda,l}(\mathrm{e}^y)\Big] v_{\lambda,l} (\omega) \,.
\end{align*}
As $\mathcal{D}_n \mathbb{U} = -\mathbb{V} \mathbb{U}$, we get [cf. \cite[(4.3)]{DO99}]
\begin{align} \label{Eq:ode}
    \left[ \partial_y  + (\frac{n-1}{2}-\lambda) \right] \widetilde{f}_{-\lambda,l}(y) = \mathrm{e}^{y} \widetilde{V}(y) \widetilde{f}_{\lambda,l}(y) \,,
\end{align}
where $\widetilde{f}_{\lambda,l}(y):=f_{\lambda,l}(\mathrm{e}^{y})$ and $\widetilde{V}(y):= -V(\mathrm{e}^{y})$.

In order to prove Theorem \ref{Thm:qual2}, we will use the following one-dimensional Carleman inequality, {see \cite[Lemma 4.3]{DO99}}.
\begin{proposition} \label{Thm:CI-1}
For any $\nu >0$, we have
$$ \nu^2 \int_{\R} \mathrm{e}^{2\nu y} \varphi(y)^2 \ \dy \leq \int_{\R} \mathrm{e}^{2\nu y} [\partial_y \varphi(y)]^2 \ \dy  $$
for all $\varphi \in C_c^{\infty}(\mathbb{R})$.    
\end{proposition}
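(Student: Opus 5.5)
The plan is to conjugate the weight into the function and expand a square; no earlier result is needed, only integration by parts with compact support.

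Given $\varphi\in C_c^\infty(\R)$ and $\nu>0$, I set $\psi(y):=\mathrm{e}^{\nu y}\varphi(y)$. Then $\psi\in C_c^\infty(\R)$, and both sides of the inequality can be written in terms of $\psi$. The left-hand side becomes $\nu^2\int_\R \psi^2\,\dy$. For the right-hand side, the identity
$$\mathrm{e}^{\nu y}\,\partial_y\varphi=\partial_y\psi-\nu\psi$$
gives
$$\int_{\R}\mathrm{e}^{2\nu y}[\partial_y\varphi]^2\,\dy=\int_\R(\partial_y\psi-\nu\psi)^2\,\dy .$$

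Next I expand the square:
$$\int_\R(\partial_y\psi-\nu\psi)^2\,\dy=\int_\R(\partial_y\psi)^2\,\dy+\nu^2\int_\R\psi^2\,\dy-2\nu\int_\R\psi\,\partial_y\psi\,\dy .$$
The only point requiring any care is the cross term. Since $\psi\,\partial_y\psi=\tfrac12\partial_y(\psi^2)$ and $\psi$ has compact support, the fundamental theorem of calculus gives
$$\int_\R\psi\,\partial_y\psi\,\dy=0 .$$
Hence
$$\int_{\R}\mathrm{e}^{2\nu y}[\partial_y\varphi]^2\,\dy=\int_\R(\partial_y\psi)^2\,\dy+\nu^2\int_\R\psi^2\,\dy\ \ge\ \nu^2\int_\R\mathrm{e}^{2\nu y}\varphi^2\,\dy ,$$
which is the claim.

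I expect no genuine obstacle here; the statement is an elementary identity followed by dropping a nonnegative term. If $\varphi$ is allowed to be complex-valued (as will occur when it is applied to the coefficients $\widetilde f_{\lambda,l}$), I would read the squares as $|\cdot|^2$. In that case the cross term is $-2\nu\,\mathrm{Re}\int_\R\bar\psi\,\partial_y\psi\,\dy=-\nu\int_\R\partial_y|\psi|^2\,\dy=0$, and the same argument applies. The sign of $\nu$ plays no role, so the inequality holds for every real $\nu$. I would also remark that, by density, the inequality extends to $H^1$ functions of compact support, or to functions with sufficient decay so that the boundary terms vanish. This is the form in which it would be used for the truncated radial coefficients in the proof of Theorem \ref{Thm:qual2}.
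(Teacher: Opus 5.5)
Your proof is correct and complete. The paper does not prove this proposition; it only cites \cite[Lemma 4.3]{DO99}.

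Your argument is the standard self-contained proof of such one-dimensional Carleman estimates. You conjugate by $\mathrm{e}^{\nu y}$ and observe that the cross term $-\nu\int_\R \partial_y|\psi|^2\,\dy$ vanishes, which gives $\int_\R \mathrm{e}^{2\nu y}|\partial_y\varphi|^2\,\dy=\int_\R|\partial_y\psi|^2\,\dy+\nu^2\int_\R|\psi|^2\,\dy$.

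Compared with the bare citation, your route makes two things explicit that the later use in Theorem \ref{Thm:qual2} needs:
\begin{itemize}
\item The estimate is an exact identity minus a nonnegative term. It holds for every real $\nu$ and for complex-valued $\varphi$, with squares read as $|\cdot|^2$. This matters because the coefficients $\widetilde f_{\pm\lambda,l}$ are complex-valued, even though the paper writes $\widetilde f^{\,2}$.
\item Your density remark justifies applying the inequality to $\eta_R\widetilde f_{-\lambda,l}$. That function is only $H^1$, not $C_c^\infty$, with compact support in $(-2\log R,2\log R)$. The $H^1$ regularity follows from $\mathbb{U}\in H^1_{\operatorname{loc}}$, since this interval corresponds to radii in $(R^{-2},R^2)$, away from the origin.
\end{itemize}
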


\begin{proof}[Proof of Theorem \ref{Thm:qual2}]
Let us assume that $\mathbb{U} \in H^1_{\operatorname{loc}}(\R^n,\mathbb{C}^N)$ be a solution to \eqref{Eq:Dirac} such that 
$$\lim_{R \rightarrow \infty} R^{k} \int_{|x| \geq  R}  |\mathbb{U}(x)|^2 \dx \rightarrow 0 $$
for all $k \in \N$. This implies in particular that $\mathbb{U} \in L^2(\R^n,\mathbb{C}^N)$, and hence, we can decompose $\mathbb{U}$ as in \eqref{Fourier}.  We take a cut-off function $\eta_R \in C_c^{\infty}(\R)$ such that $\eta_R \equiv 1$ in $(-\log R, \log R)$ and $0$ outside $(-2\log R,2\log R)$, $0\leq \eta_R \leq 1$, and $|\eta_R'(y)| \lesssim (\log R)^{-1}$. Apply the above Carleman inequality to $\eta_R  \widetilde{f}_{-\lambda,l} $ and use \eqref{Eq:ode} to obtain
\begin{align*}
   & \, \nu^2 \int_{\R} \mathrm{e}^{2\nu y} \eta_R(y)^2  \widetilde{f}_{-\lambda,l}(y)^2 \ \dy  \leq \int_{\R} \mathrm{e}^{2\nu y} [\eta_R(y) \partial_y \widetilde{f}_{-\lambda,l}(y) + \widetilde{f}_{-\lambda,l}(y) \partial_y \eta_R(y)]^2 \ \dy \\
   & \lesssim \left(\frac{n-1}{2}-\lambda \right)^2 \int_{\R} \mathrm{e}^{2\nu y} \eta_R(y)^2  \widetilde{f}_{-\lambda,l}(y)^2 + \int_{\R} \mathrm{e}^{2(\nu+1) y} \eta_R(y)^2 \widetilde{V}(y)^2\widetilde{f}_{\lambda,l}(y)^2 \\
   \qquad   & \qquad \qquad \qquad \qquad \qquad \qquad \qquad \qquad \quad \ + \int_{\R} \mathrm{e}^{2\nu y} \widetilde{f}_{-\lambda,l}(y)^2 \partial_y \eta_R(y)^2 \ \dy \,.
\end{align*}
For a fixed $\lambda \in \sigma(B)$, we choose $\nu \gg 1$ to absorb the first integral on the right hand side of the above inequality in the left and obtain
\begin{align*}
   & \, \nu^2 \int_{\R} \mathrm{e}^{2\nu y} \eta_R(y)^2  \widetilde{f}_{-\lambda,l}(y)^2 \ \dy   \lesssim \int_{\R} \mathrm{e}^{2(\nu+1) y} \eta_R(y)^2 \widetilde{V}(y)^2\widetilde{f}_{\lambda,l}(y)^2   + \int_{\R} \mathrm{e}^{2\nu y} \widetilde{f}_{-\lambda,l}(y)^2 \partial_y \eta_R(y)^2 \ \dy \,.
\end{align*} 
Using the decay condition on $\mathbb{V}$, and recalling that  $\widetilde{V}(y)= -V(\mathrm{e}^{y})$, we get 
\begin{align*}
   & \, \nu^2 \int_{\R} \mathrm{e}^{2\nu y} \eta_R(y)^2  \widetilde{f}_{-\lambda,l}(y)^2 \ \dy   \lesssim \int_{\R} \mathrm{e}^{2\nu y} \eta_R(y)^2 \widetilde{f}_{\lambda,l}(y)^2   + \int_{\R} \mathrm{e}^{2\nu y} \widetilde{f}_{-\lambda,l}(y)^2 \partial_y \eta_R(y)^2 \ \dy \,.
\end{align*}
For the fixed $\lambda \in \sigma(B)$, without loss of generality, we can assume that 
$\int_{\R} \mathrm{e}^{2\nu y} \eta_R(y)^2 \widetilde{f}_{\lambda,l}(y)^2 \leq \int_{\R} \mathrm{e}^{2\nu y} \eta_R(y)^2 \widetilde{f}_{-\lambda,l}(y)^2$, as otherwise, we start our argument by applying the Carleman inequality to $\eta_R \widetilde{f}_{\lambda,l}$. Thus, we can again absorb the first integral on the right-hand side of the above inequality in the left and obtain
\begin{align*}
& \, \nu^2 \int_{\R} \mathrm{e}^{2\nu y} \eta_R(y)^2  \widetilde{f}_{-\lambda,l}(y)^2 \ \dy    \lesssim  \int_{\R} \mathrm{e}^{2\nu y} \widetilde{f}_{-\lambda,l}(y)^2 \partial_y \eta_R(y)^2 \ \dy \\
 &  \lesssim \frac{1}{(\log R)^2} \left[ \int_{-2\log R}^{-\log R} \mathrm{e}^{2\nu y} \widetilde{f}_{-\lambda,l}(y)^2  \ \dy + \int_{\log R}^{2\log R} \mathrm{e}^{2\nu y} \widetilde{f}_{-\lambda,l}(y)^2  \ \dy \right] \\
 &  \lesssim \frac{1}{(\log R)^2}  \int_{-2\log R}^{-\log R}  \widetilde{f}_{-\lambda,l}(y)^2  \ \dy + \frac{R^{4\nu}}{(\log R)^2} \int_{\log R}^{2\log R}  \widetilde{f}_{-\lambda,l}(y)^2 \ \dy \,.
\end{align*}
Now, if $ R^{k}\int_{\log R}^{2\log R}  \widetilde{f}_{-\lambda,l}(y)^2 \dy < \infty$ for any $k \in \mathbb{N}$, then the right-hand side of the above integral goes to $0$ as $R \rightarrow \infty$, which implies $ \widetilde{f}_{-\lambda,l}(y) \equiv 0$. Indeed, recall that 
\begin{align*}
    \widetilde{f}_{-\lambda,l}(y)^2 = \langle \mathbb{U} , v_{-\lambda,l}\rangle_{L^2(\mathbb{S}^{n-1})^m}^2 & = \left[\int_{\mathbb{S}^{n-1}} \langle \mathbb{U}(\mathrm{e}^y,\om), v_{-\lambda,l}(\om) \rangle_{\mathbb{C}^m} \d\om \right]^2  \lesssim  \int_{\mathbb{S}^{n-1}} |\mathbb{U}(\mathrm{e}^y,\om)|^2 \d\om \,.
\end{align*}
Thus, for $R\gg1$, we have
\begin{align*}
  R^k \int_{\log R}^{2 \log R}  \widetilde{f}_{-\lambda,l}(y)^2 \dy  \lesssim    R^k \int_{\log R}^{2 \log R} \int_{\mathbb{S}^{n-1}} |\mathbb{U}(\mathrm{e}^y,\om)|^2 \d\om dy  \lesssim  R^{k-n+1} \int_{|x| \geq  R}  |\mathbb{U}(x)|^2 \dx \rightarrow 0 
\end{align*}
as $R \rightarrow \infty$. Hence, $ \widetilde{f}_{-\lambda,l}(y) \equiv 0$. From \eqref{Eq:ode}, we get
$$ \Big[ \partial_y  + \big(\frac{n-1}{2}-\lambda\big) \Big] \widetilde{f}_{\lambda,l}(y) = 0 \,,$$
which implies $\widetilde{f}_{\lambda,l}(y)$ is a constant multiple of $\mathrm{e}^{(\lambda -\frac{n-1}{2})y} $. Since for $R \gg 1$, $\int_{\R}  \widetilde{f}_{-\lambda,l}(y)^2 \dy,$ and $ R^{k}\int_{\log R}^{2\log R}  \widetilde{f}_{\lambda,l}(y)^2 \dy < \infty$, it follows that $\widetilde{f}_{\lambda,l}(y) \equiv 0$. Therefore, for any $\lambda \in \sigma(B)$, $\widetilde{f}_{\lambda,l} \equiv 0 $. Hence, $\mathbb{U} \equiv 0$ in $\R^n$.
\end{proof}

\begin{remark} \rm Observe that for the Coulomb potentials of the form $\mathbb{V}(x)= \frac{\alpha}{|x|}I_{N \times N}$, the result in \cite{Ca22} is applicable only in the exterior domain and for $\alpha < \frac{1}{2}$. Notably, in the above theorem we do not get any smallness restriction on the values of $\alpha$ and the result works in whole $\R^n$. This happens due to the `scalar-type' structure of the potential $\mathbb{V}$.   
    
\end{remark}

\medskip
\noindent{\textbf{Acknowledgments}.}
{U. Das, L. Fanelli and L. Roncal are supported 
by the Basque Government through the BERC 2022--2025 program
and 
by the Spanish Agencia Estatal de Investigaci\'{o}n
through BCAM Severo Ochoa accreditation CEX2021-001142-S/MCIN/AEI/10.13039/501100011033. U. Das and L. Roncal are also partially suppported by 
CNS2023-143893. L. Fanelli and L. Roncal are also supported by IKERBASQUE. L. Fanelli is also supported by the research project PID2024-155550NB-100 funded by MICIU/AEI/10.13039/501100011033 and FEDER/EU. 
L. Roncal is also supported by PID2023-146646NB-I00 funded by MICIU/AEI/10.13039/501100011033 and FEDER/EU and by ESF+.}


\end{document}